\theoremstyle{plain}
\newtheorem{thm}{Theorem}[section]
\newtheorem{prop}[thm]{Proposition}
\theoremstyle{definition}
\newtheorem{defn}[thm]{Definition}
\newtheorem{ex}[thm]{Example}
\newtheorem{que}[thm]{Question}
\mathchardef\semic="303B
\newcommand{\R}{{\mathbf R}}
\newcommand{\C}{{\mathbf C}}
\newcommand{\mH}{{\mathcal H}}
\newcommand{\mL}{{\mathcal L}}
\newcommand{\mF}{{\mathcal F}}
\DeclareMathOperator{\re}{Re}
\newcommand{\sett}[2]{ \{ #1 \, \semic \, #2 \} }
\newcommand{\bigsett}[2]{ \Big\{ #1 \semic #2 \Big\} }
\newcommand{\nul}{\textsf{N}}
\newcommand{\ran}{\textsf{R}}
\newcommand{\dom}{\textsf{D}}
\newcommand{\clos}[1]{\overline{#1}}
\newcommand{\sgn}{\text{{\rm sgn}}}
\newcommand{\barint}{\mbox{$ave \int$}}
\newcommand{\divv}{{\text{{\rm div}}}}
\newcommand{\curl}{{\text{{\rm curl}}}}
\newcommand{\dd}[2]{\frac{d #1}{d #2}}
\newcommand{\ta}{{\scriptscriptstyle \parallel}}
\newcommand{\no}{{\scriptscriptstyle\perp}}
\newcommand{\pd}{\partial}
\newcommand{\loc}{\text{{\rm loc}}}
\newcommand{\tS}{\widetilde S}
\newcommand{\E}{{\mathcal E}}
\def\barint_#1{\mathchoice
            {\mathop{\vrule width 6pt
height 3 pt depth -2.5pt
                    \kern -8.8pt
\intop}\nolimits_{#1}}%
            {\mathop{\vrule width 5pt height
3 pt depth -2.6pt
                    \kern -6.5pt
\intop}\nolimits_{#1}}%
            {\mathop{\vrule width 5pt height
3 pt depth -2.6pt
                    \kern -6pt
\intop}\nolimits_{#1}}%
            {\mathop{\vrule width 5pt height
3 pt depth -2.6pt
          \kern -6pt \intop}\nolimits_{#1}}}
\definecolor{gr}{rgb}   {0.,   0.8,   0. } 
\definecolor{bl}{rgb}   {0.,   0.5,   1. } 
\definecolor{mg}{rgb}   {0.7,  0.,    0.7}
\begin{document}

\title[Cauchy non-integral formulas]{Cauchy non-integral formulas}
\author[Andreas Ros\'en]{Andreas Ros\'en$\,^1$}
\thanks{$^1\,$Formerly Andreas Axelsson}
\address{Andreas Ros\'en, Matematiska institutionen, Link\"opings universitet, 581 83 Link\"oping, Sweden}
\email{andreas.rosen@liu.se}

\begin{abstract}
We study certain generalized Cauchy integral formulas for gradients of solutions to second order divergence form elliptic systems, which appeared in recent work by P. Auscher and A. Ros\'en.
These are constructed through functional calculus and are in general beyond the scope of 
singular integrals.
More precisely, we establish such Cauchy formulas for solutions $u$ with gradient in
weighted $L_2(\R^{1+n}_+, t^{\alpha}dtdx)$ also in the case $|\alpha|<1$.
In the end point cases $\alpha= \pm 1$, we show how to apply 
Carleson duality results by T. Hyt\"onen and A. Ros\'en to establish such Cauchy formulas.
\end{abstract}



\thanks{Supported by Grant 621-2011-3744 from the Swedish research council, VR}
\maketitle

\section{Introduction}

A fundamental problem in modern harmonic analysis has been if the Cauchy singular integral on a Lipschitz curve defines an  $L_2$ bounded operator. Calder\'on~\cite{Ca} showed that this is indeed the case when the Lipschitz constant is small, and Coifman, McIntosh and Meyer~\cite{CMcM} showed boundedness for
any Lipschitz curve. In connection with the latter work it was also realized that boundedness
of the Cauchy integral, a problem in harmonic analysis, was equivalent to the Kato square root problem, a problem in operator theory, in one dimension.
The relation can be seen as follows. The Cauchy singular integral on the graph of a Lipschitz function $g:\R\to \R$ is given by
$$
 \text{p.v.} \frac i{\pi} \int_\R \frac{u(y) (1+ig'(y))dy}{(y+ig(y))-(x+ig(x))} = \sgn(BD)u(x),\qquad u\in L_2(\R).
$$
The operator theoretic expression $\sgn(BD)$ for the Cauchy integral on the right hand side is interpreted as follows. 
In $L_2(\R)$ we have the self-adjoint differential operator $D:= i\dd{}x$, or equivalently the Fourier multiplier $-\xi$, and the 
accretive multiplication operator $B= (1+ig'(x))^{-1}$. This yields a bisectorial operator $BD$ which was shown to have a bounded holomorphic functional calculus, see McIntosh and Qian~\cite{McQ}.
In particular the bounded symbol $\sgn(\lambda)= \pm 1$, $\pm \re\lambda>0$, yields an $L_2$-bounded Cauchy integral operator $\sgn(BD)$.
Note that when $B=I$, this formula is simply the Fourier relation $\tfrac i{\pi}\mF(\text{p.v.} 1/x)= \sgn(\xi)$.

The Kato square root estimate
$$
 \Big\| \sqrt{-\dd{}{x} a(x)\dd{}x} u\Big\|_2\approx \Big\|\dd ux\Big\|_2
$$ 
on the other hand follows from the boundedness of $\sgn(BD)$ for more 
general accretive coefficients $B$. In higher dimension the Kato square root estimate
$\|\sqrt{-\divv A \nabla}u\|_2\approx \|\nabla u\|_2$ follows from a 
similar estimate $\|\sgn(BD)\|<\infty$, with 
$B=\begin{bmatrix} I & 0 \\ 0 & A\end{bmatrix}$ and
$
  D= \begin{bmatrix} 0 & \divv \\ -\nabla & 0 \end{bmatrix}.
$
A major difficulty in higher dimension, $n\ge 2$, is that $D$
has an infinite dimensional null space. 
Note that $\begin{bmatrix} 0 & \dd{}x \\ -\dd{}x & 0 \end{bmatrix}= i \dd{}x$ when $n=1$, if we identify ranges $\R^2=\C$.
In higher dimension, the Kato square root problem on $\R^n$ was solved by 
Auscher, Hofmann, Lacey, McIntosh and Tchamitchian~\cite{AHLMcT} and the more general
result that operators of the form $BD$ have bounded holomorphic functional calculi was proved by Axelsson, Keith and McIntosh~\cite{AKMc}.

Coming back to the Cauchy integral, in this paper we study certain generalized Cauchy type operators
which appeared in recent work by Auscher and Axelsson~\cite{AA1}.
More precisely, the aim is on one hand to give some complementary results for these Cauchy operators on certain 
weighted $L_2$-space between the end point cases studied in \cite{AA1}, and on the other hand 
to show duality results in these end point cases, using results of Hyt\"onen and Ros\'en~\cite{HR}.

Our Cauchy operators are constructed in the above spirit, by applying suitable bounded and holomorphic symbols to an underlaying differential operator like $BD$.
We shall even need to apply more general operator valued symbols to the differential operator which,
changing the setup slightly, will be of the form $DB= B^{-1}(BD)B$.

To formulate the problem, consider a divergence form second order elliptic system
$$
   \divv_{t,x} A(t,x) \nabla_{t,x} u=0
$$
in the upper half space $\R^{1+n}_+:= \sett{(t,x)}{t>0,x\in \R^n}$, $n\ge 1$.
We assume that $u: \R^{1+n}_+\to \C^{m}$, $m\ge 1$, is vector valued and that coefficients
$A\in L_\infty(\R^{1+n}_+; \mL(\C^{(1+n)m}))$ are accretive in the sense that there exists $\kappa>0$ such that
$$
  \inf_{v\in \C^{(1+n)m}\setminus\{0\}}  \re(A(t,x)v,v)/|v|^2\ge \kappa
$$
for almost every $(t,x)\in\R^{1+n}_+$.
With minor modifications, all our results are valid under a weaker G\aa rding type inequality, uniformly in $t$. See \cite{AA1}.

A natural gradient of solutions $u$ is the {\em conormal gradient}
$$
  \nabla_A u:= \begin{bmatrix} \pd_{\nu_A}u \\ \nabla_\ta u \end{bmatrix},
$$
where $\pd_{\nu_A}u= (A\nabla_{t,x}u)_\no$ denotes the conormal derivative and $\nabla_\ta u= \nabla_x u$ denotes the tangential gradient of $u$. Similarly, $\divv_\ta= \divv_x$ and $\curl_\ta= \curl_x$ will denote tangential divergence and curl. We write $v_\no$ and $v_\ta$ for the parts of a vector $v$ normal and tangential to the boundary.

\begin{que}
   For solutions to a given divergence form equation $\divv_{t,x} A(t,x) \nabla_{t,x} u=0$ as above,
   is there a Cauchy type formula
 $$
    \nabla_A u|_{\R^n}\mapsto \nabla_A u|_{\R^{1+n}_+}
 $$
 for the conormal gradient?
\end{que}


To answer this question, we first need to specify function spaces for $\nabla_A u$.
We shall use the following natural subspaces of $L_2^\loc(\R^{1+n}_+)$.
Here and below, we often suppress the range of functions in notation, for example 
$L_2(\R^{1+n}_+)= L_2(\R^{1+n}_+; \C^{(1+n)m})$.
We also write $\|\cdot\|_{L_2(\R^n)}= \|\cdot\|_2$.

\begin{defn}
Define
$$
  N_{2,2}(\R^{1+n}_+):= \sett{f: \R^{1+n}_+\to \C^{(1+n)m}}{\|N(W_2 f)\|_2<\infty},
$$
using the non-tangential maximal function 
$N f(x):= \sup_{|y-x|<s} |f(s,y)|$
and $L_2$ Whitney averages
$W_2 f(t,x):= t^{-(1+n)/2} \|f\|_{L_2(W(t,x))}$
over Whitney regions $W(t,x):= \sett{(s,y)}{1/2<s/t<2, |y-x|<t}$.

For $-1\le \alpha\le 1$, let
$$
  L_2(\R^{1+n}_+, t^\alpha):=\bigsett{f: \R^{1+n}_+\to \C^{(1+n)m}}{\iint_{\R^{1+n}_+} |f(t,x)|^2 t^{\alpha} dtdx<\infty}.
$$
\end{defn}

It was shown in \cite[Lem. 5.3]{AA1} that
\begin{equation}   \label{eq:l2locestsforn}
  \sup_{t>0} \frac 1t\int_t^{2t} \|f_s\|_2^2 ds \lesssim \|N(W_2 f)\|_2^2 \lesssim \int_0^\infty \|f_s\|_2^2 \frac {ds}s.
\end{equation}
We think of $N_{2,2}(\R^{1+n}_+)$ as a substitute for $L_2(\R^{1+n}_+, t^\alpha)$ in the endpoint case
$\alpha=-1$, which allows for non-zero traces.

To state our results, we next introduce the operators that we use.
For more details, see \cite{AA1}.
With the second order divergence form operator $\divv_{t,x} A(t,x) \nabla_{t,x}$ comes a first order
self-adjoint differential operator
$
  D:= \begin{bmatrix} 0 & \divv_\ta \\ -\nabla_\ta & 0 \end{bmatrix}
$
acting tangentially, parallel to the boundary $\R^n$, and a pointwise transformed coefficient matrix
\begin{equation}   \label{eq:defnB}
  B= \begin{bmatrix} a^{-1} & -a^{-1}b \\ ca^{-1} & d-ca^{-1}b \end{bmatrix}
  \qquad\text{if}\qquad A= \begin{bmatrix} a & b \\ c & d \end{bmatrix}.
\end{equation}
How these operators appear are explained in Section~\ref{sec:ops}.
They act on $\C^{(1+n)m}$-valued functions, written as column vectors with normal parts first and tangential parts second.
We write $f_t(x)= f(t,x)$ for such functions in $\R^{1+n}_+$, and similarly for the coefficients 
$B_t(x)= B(t,x)$.
Write 
$$
  \E_t(x)=\E(t,x):=  I - B_0(x)^{-1}B(t,x),
$$
where $B_0(x)$ are some $t$-independent accretive coefficients $B_0\in L_\infty(\R^n; \mL(\C^{(1+n)m}))$.
Often $B_0(x)= B(0,x)$, but not always. 

Our fundamental operator is $DB_0$. Both as an operator 
in $L_2(\R^n)$ and in $L_2(\R^{1+n}_+, t^\alpha)$, acting in the $x$-variable for each fixed $t>0$,
it defines a closed and densely defined operator with spectrum contained in a bisector
$S_\omega= S_{\omega+}\cup(-S_{\omega+})$, where
$$
  S_{\omega+}:= \sett{\lambda\in\C}{|\arg\lambda|\le\omega}\cup\{0\}, \qquad \omega<\pi/2.
$$
In \cite{AKMc} it was proved that $DB_0$ has a bounded holomorphic functional calculus,
which gives estimates of operators $b(DB_0)$ formed by applying holomorphic functions $b:S_\mu\to \C$,
$\omega<\mu$, to the operator $DB_0$.
In particular, we shall need the operators
\begin{gather*}
  \Lambda:= |DB_0|, \qquad
  e^{-t\Lambda}, t>0, \qquad
  E_0^\pm := \chi_\pm(DB_0), \\
  S f_t:= \int_0^t \Lambda e^{-(t-s)\Lambda} E_0^+ f_s ds + \int_t^\infty \Lambda e^{-(s-t)\Lambda} E_0^- f_s ds.
\end{gather*}
For the first three operators, we view $DB_0$ as an operator in $L_2(\R^n)$ and apply the scalar holomorphic functions
$\lambda\mapsto |\lambda|:= \pm \lambda$, $\pm \re\lambda>0$, $\lambda\mapsto e^{-t|\lambda|}$
and $\lambda\mapsto \chi_\pm(\lambda):= 1$ if $\pm\re \lambda>0$ and $0$ elsewhere.
For the definition of $S$, we view $DB_0$ as an operator in $L_2(\R^{1+n}_+,t^\alpha)$
and apply the operator-valued holomorphic function $\lambda\mapsto F(\lambda)$, where
$$
  F(\lambda)f_t:=  \int_0^t \lambda e^{-(t-s)|\lambda|} \chi_+(\lambda) f_s ds - \int_t^\infty \lambda e^{-(s-t)|\lambda|} \chi_-(\lambda) f_s ds.
$$
When $|\alpha|<1$, $F$ is a bounded function, and hence $S= F(DB_0)$ is a bounded operator
on $L_2(\R^{1+n}_+,t^\alpha)$.
For the endpoint spaces $\alpha=\pm 1$, see Section~\ref{sec:ops}.
For further details of this operational calculus, see \cite[Sec. 6]{AA1}.

\begin{thm}   \label{thm:main}
Consider first the case $|\alpha|<1$ and assume that $\divv_{t,x} A(t,x) \nabla_{t,x}u=0$
with estimates $\|f\|_{L_2(\R^{1+n}_+,t^\alpha)}<\infty$ of the conormal gradient $f:= \nabla_A u$.
Then there exists a function $h^+\in E_0^+L_2(\R^n)$, such that
\begin{equation}   \label{eq:inteqn}
  f_t= \Lambda^\sigma e^{-t\Lambda}E_0^+ h^++  S\E_t f_t, \qquad \sigma:= (\alpha+1)/2.
\end{equation}
From this follows estimates
\begin{equation}   \label{eq:supest}
  \sup_{t>0} t^{-1} \int_t^{2t} \|\Lambda^{-\sigma}f_s\|_2^2 ds\lesssim \|f\|_{L_2(\R^{1+n}_+,t^\alpha)}
\end{equation}
and we have limits
\begin{equation}   \label{eq:dinitraces}
   \lim_{t\to 0+} t^{-1}\int_t^{2t}\|\Lambda^{-\sigma}f_s-h\|_2^2 ds=0= \lim_{t\to\infty} t^{-1}\int_t^{2t}\|\Lambda^{-\sigma}f_s\|_2^2 ds,
\end{equation}
where $h:= h^+ + \int_0^\infty \Lambda^{1-\sigma} e^{-s\Lambda} E_0^-\E_s f_s ds$, $\|h\|_2\lesssim \|f\|_{L_2(\R^{1+n}_+,t^\alpha)}$.
If furthermore $\alpha>0$, then we have the pointwise $L_2(\R^n)$ estimates $\sup_{t>0} \|\Lambda^{-\sigma}f_t\|_2\lesssim \|f\|_{L_2(\R^{1+n}_+,t^\alpha)}$
and limits
\begin{equation}   \label{eq:tracesl2}
\lim_{t\to 0+}\|\Lambda^{-\sigma}f_t-h\|_2=0= \lim_{t\to\infty} \|\Lambda^{-\sigma}f_t\|_2.
\end{equation}

Conversely, if $\|\E\|_{L_\infty(\R^{1+n}_+)}$ is sufficiently small, then the Cauchy type formula
\begin{equation}   \label{eq:cauchyform}
  f_t:= (I- S\E)^{-1} \Lambda^\sigma e^{-t\Lambda}E_0^+ h^+, \qquad h^+\in E_0^+L_2(\R^n),
\end{equation}
constructs a function $f$ with $\|f\|_{L_2(\R^{1+n}_+,t^\alpha)}\lesssim \|h^+\|_2$, which is the 
conormal gradient $f= \nabla_A u$ of a solution $u$ to $\divv_{t,x} A(t,x) \nabla_{t,x}u=0$.

When $\alpha=+1$, the above holds with the following changes.
We need to assume throughout that $\|\E\|_*<\infty$, and for the converse statement that $\|\E\|_*$
is sufficiently small, where $\|\cdot\|_*$ denotes the Carleson--Dahlberg norm from Definition~\ref{defn:carleson}.
Here $\sigma=1$ and we have traces in $L_2(\R^n)$ sense.

When $\alpha=-1$, the above holds with the following changes.
We need to assume throughout that $\|\E\|_*<\infty$, and for the converse statement that $\|\E\|_*$
is sufficiently small. Furthermore we need to replace $L_2(\R^{1+n}_+, t^\alpha)$ throughout by
$N_{2,2}(\R^{1+n}_+)$.
Here $\sigma=0$ and we have traces only in the square Dini sense \eqref{eq:dinitraces}.
\end{thm}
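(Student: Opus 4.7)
The plan is to convert the second-order PDE into a first-order ODE in $L_2(\R^n)$ and invert via the holomorphic functional calculus of $DB_0$. A computation from \cite{AA1} shows that $u$ solves $\divv_{t,x} A \nabla_{t,x} u = 0$ if and only if $f = \nabla_A u$ satisfies $\partial_t f_t + DB f_t = 0$ together with the curl-free constraint on the tangential part. Writing $B = B_0(I - \E)$ recasts this as
$$
(\partial_t + DB_0)f_t \,=\, DB_0\, \E_t f_t,
$$
reducing the analysis to inverting $\partial_t + DB_0$ on the appropriate weighted space and then iterating against the multiplicative perturbation $\E$.

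For $|\alpha|<1$, I would work entirely in $L_2(\R^{1+n}_+, t^\alpha)$. The semigroup $e^{-t\Lambda} E_0^+$ on $L_2(\R^n)$ produces the homogeneous solutions with decay as $t \to \infty$, and the factor $\Lambda^\sigma$ with $\sigma = (\alpha+1)/2$ is chosen so that $h^+ \mapsto (t \mapsto \Lambda^\sigma e^{-t\Lambda} E_0^+ h^+)$ embeds $E_0^+ L_2(\R^n)$ isomorphically into $L_2(\R^{1+n}_+, t^\alpha)$; this follows from the spectral identity $\int_0^\infty t^\alpha \lambda^{2\sigma} e^{-2t\lambda}\,dt \sim 1$ uniformly in $\lambda > 0$. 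Duhamel's formula applied to the forcing term $DB_0 \E f$ then produces precisely $S\E_t f_t$, since, on the decomposition $E_0^+ \oplus E_0^-$, the operator $\partial_t + DB_0$ acts as $\partial_t \pm \Lambda$ and the symbol $F(\lambda)$ encodes exactly the corresponding forward and backward Green's kernels. This yields the integral equation \eqref{eq:inteqn}.

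The estimates \eqref{eq:supest} and limits \eqref{eq:dinitraces} follow by applying $\Lambda^{-\sigma}$ to \eqref{eq:inteqn}. The semigroup part becomes $e^{-t\Lambda} E_0^+ h^+$, strongly continuous into $L_2(\R^n)$, tending to $h^+$ at $t=0^+$ and to $0$ as $t \to \infty$. The contribution $\Lambda^{-\sigma} S \E f$ corresponds to a lower-order variant of $S$ (with $\Lambda^{1-\sigma}$ in place of $\Lambda$) which is still bounded on $L_2(\R^{1+n}_+, t^\alpha)$, and whose Dini vanishing at the endpoints follows from the absolute continuity of the $ds$-integrals in the definition of $S$. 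When $\alpha > 0$ one has $\sigma > 1/2$, giving enough smoothing to upgrade the averaged traces to the pointwise $L_2(\R^n)$ limits \eqref{eq:tracesl2}. For the converse, the Neumann series $(I - S\E)^{-1} = \sum_{k \ge 0} (S\E)^k$ converges in $\mathcal{L}(L_2(\R^{1+n}_+, t^\alpha))$ whenever $\|\E\|_\infty < 1/\|S\|_{\op}$; the resulting $f$ satisfies the first-order ODE, and reversing the first-order reduction, together with the preservation of the $\curl_\ta$-structure by $e^{-t\Lambda} E_0^+$ and $S$, recovers a bona fide solution $u$.

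The main obstacle is the endpoint behaviour at $\alpha = \pm 1$, where $F$ is no longer a bounded scalar symbol and $S$ fails to be bounded on $L_2(\R^{1+n}_+, t^{\pm 1})$. Here I would invoke the Carleson--Dahlberg duality of Hyt\"onen--Ros\'en \cite{HR}: at $\alpha = +1$ the Carleson norm $\|\E\|_*$ controls multiplication by $\E$ from $L_2(\R^{1+n}_+, t\, dtdx)$ into the predual of its natural dual tent-type space, while at $\alpha = -1$ it controls the action from $N_{2,2}(\R^{1+n}_+)$ into the predual of the Carleson space. Combined with the dual boundedness of $S$ on these tent/Carleson spaces, this yields $\|S\E\|_{\op} \lesssim \|\E\|_*$, after which the Neumann series argument and the derivation of \eqref{eq:inteqn} and of the traces proceed exactly as in the interior case. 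The value $\sigma = 1$ at $\alpha = +1$ again provides the smoothing needed for $L_2(\R^n)$ traces, whereas $\sigma = 0$ at $\alpha = -1$ only permits the Dini-averaged sense \eqref{eq:dinitraces}.
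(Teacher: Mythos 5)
Your overall strategy coincides with the paper's: reduce to the first order system $\partial_t f+DB_0f=DB_0\E f$, invert $\partial_t+DB_0$ by Duhamel using the symbol $F$, run a Neumann series for the converse, and use the Hyt\"onen--Ros\'en Carleson duality at $\alpha=\pm 1$. However, there are two genuine gaps in the case $|\alpha|<1$, which is exactly the case this paper must prove.

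First, the crucial step in establishing \eqref{eq:inteqn} is not the formal Duhamel computation (which the paper itself only presents as a heuristic in Section~\ref{sec:ops}) but the proof that the homogeneous part $\tilde f:=f-S\E f$ is of the form $\Lambda^\sigma e^{-t\Lambda}E_0^+h^+$ with $h^+\in E_0^+L_2(\R^n)$. You integrate the ODE as if $f$ had a trace $f_0$ at $t=0$ and decayed at $t=\infty$, but for $f\in L_2(\R^{1+n}_+,t^\alpha)$ no such trace in $L_2(\R^n)$ exists a priori; the trace only exists after applying $\Lambda^{-\sigma}$, and proving this is precisely the content of \eqref{eq:inteqn}--\eqref{eq:dinitraces}. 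The paper handles this by integrating against the truncated operators $S_\epsilon$, showing the error terms III, IV vanish as $\epsilon\to0$, and then proving $\tilde f_t\in\ran(\Lambda^\sigma)$ with $\sup_t\|h_t\|_2\lesssim\|f\|_{L_2(\R^{1+n}_+,t^\alpha)}$ via the Calder\'on reproducing formula $I=4\int_0^\infty(s\Lambda e^{-s\Lambda})^2\,ds/s$ and duality against $(\Lambda^*)^{-\sigma}\phi$, followed by a weak compactness argument to extract $h$ from the family $h_s$ and identify $\tilde f_t=\Lambda^\sigma e^{-t\Lambda}h$. Your proposal asserts the conclusion of this step without any substitute argument; the remark that the map $h^+\mapsto\Lambda^\sigma e^{-t\Lambda}E_0^+h^+$ embeds $E_0^+L_2$ into $L_2(t^\alpha)$ goes in the converse direction and does not produce $h^+$ from $f$.

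Second, your route to \eqref{eq:supest} is flawed: you claim that the lower-order operator $\tS$ (with $\Lambda^{1-\sigma}$ in place of $\Lambda$) ``is still bounded on $L_2(\R^{1+n}_+,t^\alpha)$''. This is neither true in general (the symbol $\int_0^t|\lambda|^{1-\sigma}e^{-(t-s)|\lambda|}\,ds\lesssim|\lambda|^{-\sigma}$ is not uniformly bounded in $\lambda$, reflecting the mismatch of homogeneities), nor would it imply \eqref{eq:supest}, which is a uniform-in-$t$ Whitney-averaged $L_2(\R^n)$ bound, not a weighted $L_2(\R^{1+n}_+)$ bound. The paper instead estimates $\tS f_t$ directly in $L_2(\R^n)$, uniformly in $t$, by the splitting $I+II+III+IV$: the global term by duality and square function estimates, the intermediate terms by the cancellation $I-e^{-2s\Lambda}$, and the local weakly singular term either pointwise in $t$ (only when $\alpha>0$, i.e.\ $\sigma>1/2$) or in Whitney average (when $\alpha\le0$); this dichotomy is exactly what produces the pointwise traces \eqref{eq:tracesl2} for $\alpha>0$ versus the square Dini traces \eqref{eq:dinitraces} otherwise, a distinction your smoothing heuristic gestures at but does not prove. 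The remaining parts of your proposal (the converse via square function estimates and the Neumann series, and the endpoint cases via $\|S\E\|\lesssim\|\E\|_*$) agree with the paper, noting that the endpoint cases are in any event quoted from \cite{AA1} with the refined factorization through $C_{2,2}(\R^{1+n}_+)$ supplied in Section~\ref{sec:ops}.
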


Note that the trace spaces for $f$ are exactly the fractional homogeneous Sobolev spaces $\dot H^{-\sigma}(\R^n)$. We record the following result, proved in Section~\ref{sec:proof}.

\begin{prop}   \label{prop:sob}
  Let $0\le \sigma\le 1$.
  For all $f\in \dom(\Lambda^{-\sigma})$ with $\curl_\ta f_\ta=0$, we have
$$
  \|\Lambda^{-\sigma} f\|_2\approx \|f\|_{\dot H^{-\sigma}}.
$$
\end{prop}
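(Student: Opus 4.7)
My plan is to reduce the comparison $\|\Lambda^{-\sigma}f\|_2 \approx \|f\|_{\dot H^{-\sigma}}$ to a comparison between $\Lambda = |DB_0|$ and $|D|$ on $\clos{\ran D}$, treat the endpoints $\sigma=0,1$ directly, and interpolate. The curl-free condition on $f_\ta$ places $f \in \clos{\ran D}$, which equals $\clos{\ran(DB_0)}$ since $B_0$ is a bounded invertible multiplication operator. A pointwise Fourier computation on $\clos{\ran D}$ gives $\widehat{D^2}(\xi) = |\xi|^2 I$, so $|D| = |\nabla_\ta|$ as Fourier multipliers there, and $\||D|^{-\sigma}f\|_2 = \|f\|_{\dot H^{-\sigma}}$. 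The proposition therefore reduces to showing $\|\Lambda^{-\sigma}f\|_2 \approx \||D|^{-\sigma}f\|_2$ for $f \in \clos{\ran D}\cap\dom(\Lambda^{-\sigma})$.

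The case $\sigma=0$ is trivial. For $\sigma=1$ I would use $\Lambda^{-1} = \sgn(DB_0)\, B_0^{-1} D^{-1}$ and $|D|^{-1} = \sgn(D)\, D^{-1}$ on $\clos{\ran D}$. Since $B_0^{-1}$ is bounded by accretivity and $\sgn(DB_0)$ is bounded on $L_2$ by the bounded $H^\infty$ calculus of $DB_0$ from \cite{AKMc}, the claim reduces to $\|D^{-1}\sgn(DB_0) f\|_2 \approx \|D^{-1}f\|_2$. For this I would exploit the algebraic intertwining $DB_0\cdot D = D\cdot B_0 D$, which after extension through the $H^\infty$ calculus yields $D^{-1} F(DB_0)\, D = F(B_0 D)$ on $\clos{\ran D}$ modulo the null space of $D$. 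Since $B_0 D = B_0(DB_0)B_0^{-1}$ is an $L_2$-bounded similarity of $DB_0$, it inherits a bounded $H^\infty$ calculus, and applying the identity with $F=\sgn$ produces an $L_2$-bounded and boundedly invertible operator, giving the required norm equivalence.

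For $0<\sigma<1$ I would apply Stein complex interpolation to the analytic family $T_z := \Lambda^{-z}|D|^z$ on $\clos{\ran D}$. At $\re z=0$, $T_{iy}$ is a product of two $L_2$-bounded operators with at most subcritical exponential growth in $|y|$, coming from the bounded $H^\infty$ calculi of $DB_0$ and $D$. At $\re z=1$, the factorization $T_{1+iy} = \Lambda^{-iy}(\Lambda^{-1}|D|)|D|^{iy}$ together with boundedness of the middle factor from the $\sigma=1$ argument shows $T_{1+iy}$ is bounded. Stein's lemma then delivers $\|\Lambda^{-\sigma}f\|_2 \lesssim \|f\|_{\dot H^{-\sigma}}$, and the reverse inequality follows from the symmetric family $|D|^{-z}\Lambda^z$ (whose endpoint $z=1$ is boundedness of $|D|^{-1}\Lambda = \sgn(D)\sgn(B_0 D)\, B_0$, again by the same intertwining).

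I expect the main obstacle to be the intertwining identity $D^{-1} F(DB_0)\, D = F(B_0 D)$: although $DB_0\cdot D = D\cdot B_0 D$ is a formal product-rule identity at the polynomial level, pushing it through the full $H^\infty$ functional calculus requires careful handling of the infinite-dimensional null space of $D$ and of the range-closure decomposition $L_2 = \nul(DB_0)\oplus\clos{\ran(DB_0)}$ on which the fractional powers are defined.
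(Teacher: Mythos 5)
Your route is genuinely different from the paper's, and its skeleton (endpoints $\sigma=0,1$ plus interpolation with bounded imaginary powers) can be made to work, but the decisive step is exactly the one you flag at the end and leave open, and your key identities are not correct as written. For $n\ge 2$ the null space $\nul(D)=\mH^\perp$ is infinite dimensional, so $\Lambda^{-1}=\sgn(DB_0)B_0^{-1}D^{-1}$ fails: if $f=DB_0h$ with $h\in\mH$, then the $\mH$-valued inverse gives $D^{-1}f=P_\mH(B_0h)$, and $B_0^{-1}P_\mH B_0h\ne h$ in general. Likewise the correct intertwining is $F(DB_0)Dv=DF(B_0D)v$ for $v\in\dom(D)$, which after inverting $D$ on $\mH$ yields $D^{-1}F(DB_0)Dv=P_\mH F(B_0D)v$, not $F(B_0D)v$. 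Consequently, at $\sigma=1$ the direction $\|\Lambda^{-1}f\|_2\lesssim\||D|^{-1}f\|_2$ (the bound your family $T_{1+iy}$ needs) requires the lower bound $\|P_\mH\sgn(B_0D)v\|_2\gtrsim\|v\|_2$ for $v\in\mH$, equivalently that $P_\mH$ restricted to $\clos{\ran(B_0D)}=B_0\mH$ is an isomorphism onto $\mH$. This is true, but it is not a formal consequence of the bounded $H^\infty$ calculus alone; it follows from the topological, non-orthogonal splitting $L_2=\nul(B_0D)\oplus\clos{\ran(B_0D)}=\mH^\perp\oplus B_0\mH$ for the bisectorial operator $B_0D$, whose bounded projection onto $B_0\mH$ along $\mH^\perp$ gives the needed two-sided comparison. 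Until you supply this (plus the routine analyticity and admissible-growth hypotheses for Stein's lemma on a dense class), the $\sigma=1$ endpoint, and hence the whole interpolation, is not established. Your other endpoint identity $|D|^{-1}\Lambda=\sgn(D)\sgn(B_0D)B_0$ on $\dom(\Lambda)\cap\mH$ is fine and gives the reverse inequality cleanly.

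For comparison, the paper proves the proposition without inverting $D$ or intertwining at all: it passes to the adjoint $\Lambda^*=|B_0^*D|$, where square function estimates and accretivity give the first-order equivalence $\|\Lambda^*g\|_2\approx\||D|g\|_2$, interpolates (via the remark after Theorem 4.2 of \cite{AMcNhol}) to get $\|(\Lambda^*)^\sigma g\|_2\approx\||D|^\sigma g\|_2$ for $0\le\sigma\le1$, and then dualizes: $\|\Lambda^{-\sigma}f\|_2=\sup\,(\Lambda^{-\sigma}f,(\Lambda^*)^\sigma g)\approx\sup_{\||D|^\sigma g\|_2=1}(f,g)=\|f\|_{\dot H^{-\sigma}}$, the curl-free hypothesis $f\in\mH$ entering only to kill the $\mH^\perp$ component of $g$, on which $|D|^\sigma$ vanishes. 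This sidesteps the null-space bookkeeping and the Stein-family technicalities entirely; your interpolation is morally the same mechanism (bounded imaginary powers), so if you close the endpoint gap as indicated your argument goes through, at the cost of noticeably more work.
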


Note that in the cases $\alpha=\pm 1$, the $t$-independent coefficients $B_0$ are uniquely determined by
$B$, see \cite[Lem. 2.2]{AA1}. When $|\alpha|<1$, this is not the case.

We also remark that the representation formula \eqref{eq:inteqn} can be used to prove various 
other estimates of solutions. See \cite{AA1}.

\begin{ex}
To recognize \eqref{eq:cauchyform} as a Cauchy formula, consider the special case $A=B=I$,
$n=1=m$ and $\alpha=-1$.
Then $\nabla u$ will be anti-analytic, and hence
$$
   \nabla u(t,x)= \frac i{2\pi}\int_\R \frac {\nabla u(0,y)}{y-x+it} dy
$$
is the Cauchy type reproducing formula we look for.
Letting $\E=0$, $\sigma=0$ and $B=I$ 
in Theorem~\ref{thm:main}, formula \eqref{eq:cauchyform} reduces to
$$
  \nabla u= e^{-t|D|}\chi_+(D) h^+.
$$
To compare these two expressions, note the Fourier relation $\tfrac i{2\pi}\mF_x((it-x)^{-1})= e^{-t|\lambda|}\chi_+(-\lambda)$,
and that $D$ is the Fourier multiplier $-\xi$.

As less trivial example, we consider the special case $m=1$, $A$ being real $t$-independent coefficients
and $\alpha=-1$. Then it was shown in \cite{R1} that 
$$
  e^{-t\Lambda} E_0^+ h^+(x) = \nabla_A \int_{\R^
  n} \Gamma_{(0,y)}(t,x) h^+(y) dy
$$
is the conormal gradient of the single layer potential, for normal vector / scalar fields $h\in L_2(\R^n)$.
Here $\Gamma_{(s,y)}$ denotes the fundamental solution to $\divv_{t,x} A(x)\nabla_{t,x}$ in $\R^{1+n}$ with pole at $(s,y)$.
Note that in the case of the Laplace equation $A=I$, 
$$
\nabla_{t,x} \Gamma_{(0,0)}(t,x)= \frac 1{\sigma_n}\frac{(t,x)}{(t^2+|x|^2)^{(1+n)/2}}
$$
is the Cauchy/Riesz kernel, $\sigma_n$ denoting the area of the unit sphere in $\R^{1+n}$.

Finally, we remark that for general systems, $m\ge 2$, and general coefficients $A$, the operators 
defined from $DB_0$ by functional calculus are usually beyond the scope of singular integrals. 
For example, the known constructions and estimates of the fundamental solution $\Gamma_{(s,y)}$
require De Giorgi-Nash local H\"older estimates of solutions to the divergence form equation,
which may fail for systems, $m\ge 2$.
\end{ex}

The end point cases $\alpha=\pm 1$ and the estimate of $S$ in the case $|\alpha|<1$ was proved in \cite{AA1}. In this paper we supply the details of the remaining results stated for $|\alpha|<1$ in Section~\ref{sec:proof} and a simplified proof of the estimate for $S$ in the case $\alpha=+1$ in Section~\ref{sec:ops}.
In the final Section~\ref{sec:bvp}, we make some remarks on applications to the Neumann and Dirichlet 
problem for divergence form equations.

\section{Carleson estimates of operators}  \label{sec:ops}

The aim with this section is to give a simplified proof of the estimates of the singular integral
operator $S=S_A$ from \cite{AA1} in the case $\alpha=+1$, using Carleson duality results from \cite{HR}.
We start by deriving the integral equation \eqref{eq:inteqn} for the conormal gradient $f=\nabla_A u$ from the divergence form second order differential equation $\divv_{t,x} A(t,x) \nabla_{t,x}u=0$ for the potential $u$.

Splitting $A$ as in \eqref{eq:defnB}, we have $f_\no= a\pd_t u+ b\nabla_\ta u$ and $f_\ta= \nabla_\ta u$.
Thus the divergence form equation, in terms of $f$ reads
$$
  \pd_t f_\no + \divv_\ta ( c a^{-1}(f_\no- b f_\ta) + d f_\ta )= 0.
$$
The condition that $f$ is the conormal gradient of a function $u$, determined up to constants, we express 
as the curl-free condition
$$
\begin{cases}
  \pd_t f_\ta= \nabla_\ta( a^{-1}(f_\no- b f_\ta) ), \\
  \curl_\ta f_\ta =0.
\end{cases}
$$
In vector notation, we have 
$$
  \pd_t  \begin{bmatrix}f_\no \\ f_\ta \end{bmatrix}+
  \begin{bmatrix} 0 & \divv_\ta \\ -\nabla_\ta & 0  \end{bmatrix}
   \begin{bmatrix} a^{-1} & -a^{-1}b \\ ca^{-1} & d- ca^{-1}b  \end{bmatrix} \begin{bmatrix}f_\no \\ f_\ta \end{bmatrix}=0,
$$
together with the constraint $\curl_\ta f_\ta=0$, or in short hand notation
$$
   \pd_t f_t + DB_t f_t=0, \qquad f_t \in \clos{\ran(D)}=: \mH.
$$
With the $t$-independent coefficients $B_0$, we rewrite the equation as 
\begin{equation}    \label{eq:odepert}
  \pd_t f_t + DB_0 f_t = DB_0 \E_t f_t.
\end{equation}
We shall use freely known properties of operators of the form $DB_0$. See \cite{AAM, AA1}.
In particular $DB_0$ is a (non-injective if $n\ge 2$) bisectorial operator and
$L_2(\R^n)= \nul(DB_0)\oplus \mH$, where 
$$
  \mH= E_0^+L_2 \oplus E_0^- L_2.
$$
We now integrate the vector-valued ordinary differential equation \eqref{eq:odepert} for $f_t\in \mH$. Applying the projections $E_0^\pm$, we have
$$
  \begin{cases}
    \pd_t f^+_t + \Lambda f^+_t = \Lambda E_0^+ \E_t f_t, \\
    \pd_t f^-_t - \Lambda f^-_t = -\Lambda E^-_0 \E_t f_t,
  \end{cases}
$$ 
where $f_t^\pm := E_0^\pm f_t$.

Formally, assuming $\lim_{t\to 0^+} f_t = f_0$ and $\lim_{t\to\infty} f_t =0$, we integrate these two equations $$
  \begin{cases}
    f_0^+- e^{-t\Lambda} f_0^+ = \int_0^t \Lambda e^{-(t-s)\Lambda} E_0^+\E_s f_s ds, \\
    0- f_t^-= -\int_t^\infty \Lambda e^{-(s-t)\Lambda} E_0^- \E_s f_s ds,
  \end{cases}
$$ 
and subtraction yields the integral equation
$$
  f_t= e^{-t\Lambda} E_0^+ f_0 + S \E_t f_t.
$$
In Section~\ref{sec:proof} we show by a rigorous argument that, depending on the function space 
for $f$, integration indeed yields this equation with $E_0^+f_0= \Lambda^\sigma h^+$, for some 
$h^+ \in E_0^+L_2(\R^n)$.
In this section, we discuss estimates of the singular integral operator $S$ and the multiplier $\E$, 
in particular in the case $\alpha=+1$. 

\begin{defn}    \label{defn:carleson}
  For functions $f$ in $\R^{1+n}_+$, define the Carleson functional 
  $$
  Cf(x):= \sup_{r>0}r^{-n} \iint_{|y-x|<r-s}|g(s,y)|dsdy
  $$
  and the area functional 
  $
  Af(x):= \iint_{|y-x|<s} |f(s,y)|s^{-n}dsdy$, $x\in \R^n$.
Define the Banach space
$$
  C_{2,2}(\R^{1+n}_+):= \sett{f: \R^{1+n}_+\to \C^{(1+n)m}}{\|C(W_2 f)\|_{L_2(\R^n)}\approx \|A(W_2 f)\|_{L_2(\R^n)}<\infty}.
$$
\end{defn}

The equivalence of the Carleson and area functionals
$$
  \|Cg\|_{L_p(\R^n)}\approx \|Ag\|_{L_p(\R^n)}, \qquad 1<p<\infty,
$$
follows from  \cite[Thm. 3]{CMS}.
From \cite[Thm. 3.2, 3.1]{HR}, we recall the following duality result.

\begin{prop}   \label{prop:duality}
  The Banach space $N_{2,2}(\R^{1+n}_+)$ is the dual space of $C_{2,2}(\R^{1+n}_+)$
   under the $L_2(\R^{1+n}_+)$ pairing.
  The space $C_{2,2}(\R^{1+n}_+)$ is not reflexive, that is $N_{2,2}(\R^{1+n}_+)^*\supsetneqq C_{2,2}(\R^{1+n}_+)$. 
\end{prop}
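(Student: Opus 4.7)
The plan is to deduce both statements from the tent-space duality machinery of Hyt\"onen--Ros\'en~\cite{HR}. The key ingredient is the Carleson-type pairing inequality
\begin{equation*}
  \Bigl| \iint_{\R^{1+n}_+} f \cdot \overline{g} \, dt\, dx \Bigr| \lesssim \|N(W_2 f)\|_2 \, \|A(W_2 g)\|_2,
\end{equation*}
valid for $f \in N_{2,2}(\R^{1+n}_+)$ and $g \in C_{2,2}(\R^{1+n}_+)$. To establish it I would first pass from pointwise values of $f$ and $g$ to their $L_2$ Whitney averages via Cauchy--Schwarz on a Whitney decomposition of $\R^{1+n}_+$, so that the integral is dominated by $\iint W_2 f \cdot W_2 g \,\tfrac{dt\,dx}{t}$. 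The resulting expression is then handled by the classical tent-space pairing of Coifman--Meyer--Stein~\cite{CMS}, giving the bound $\|N(W_2 f)\|_2 \|A(W_2 g)\|_2$, where the equivalence of Carleson and area norms is used implicitly. This yields the isometric embedding $N_{2,2} \hookrightarrow C_{2,2}^*$.

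For the reverse inclusion $C_{2,2}^* \hookrightarrow N_{2,2}$, I would argue by a local Riesz representation. Any bounded functional $\Phi$ on $C_{2,2}$ restricts to a bounded functional on $L_2$ of each bounded subregion $K \Subset \R^{1+n}_+$, since $L_2$ functions compactly supported in $K$ lie in $C_{2,2}$ with comparable norm. Piecing together the local Riesz representers over a Whitney decomposition produces $g \in L_2^{\loc}(\R^{1+n}_+)$ such that $\Phi(f) = \iint f \overline{g}\, dt\, dx$ for compactly supported $f$. To verify $g \in N_{2,2}$ one tests $\Phi$ against normalized indicator functions of unions of Whitney cubes controlled by level sets of $N(W_2 g)$; the pairing inequality applied in reverse then forces $\|N(W_2 g)\|_2 \lesssim \|\Phi\|$.

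The main obstacle is the non-reflexivity assertion $C_{2,2} \subsetneq N_{2,2}^*$. Since $N_{2,2}$ carries an $L^\infty$-type structure (defined via a supremum over cones), one expects a phenomenon analogous to $L^1 \subsetneq (L^\infty)^*$, where ``finitely additive measures at the boundary'' enlarge the bidual. Concretely, I would take a sequence $g_k \in C_{2,2}$ of unit-norm bumps concentrated on progressively thinner strips $\{t \sim 2^{-k}\}$ near the boundary and extract by Banach--Alaoglu a weak-$*$ limit $\Phi$ of the pairings $f \mapsto \iint f \overline{g_k}$ inside $N_{2,2}^*$. Such a $\Phi$ vanishes on every $f$ supported in $\{t \ge \varepsilon\}$, so it cannot arise from integration against any $L_2^{\loc}$ function in $C_{2,2}$. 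The delicate technical point, handled in Theorem~3.1 of \cite{HR}, is to verify that $\Phi$ acts nontrivially on some element of $N_{2,2}$, typically by witnessing it against a test family with a well-chosen boundary behavior.
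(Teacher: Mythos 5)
The paper does not actually prove this proposition: it is quoted directly from \cite[Thm.\ 3.2, 3.1]{HR}, so your proposal should be measured against the Hyt\"onen--Ros\'en argument it is trying to reconstruct. Two of your three steps are essentially sound. The pairing inequality works as you say (Whitney decomposition, Cauchy--Schwarz on each Whitney region, then the elementary Fubini-type bound $\iint F\,G\,dtdx \lesssim \int_{\R^n} NF\cdot AG\,dx$ applied to $F=W_2f$, $G=W_2g$), though the intermediate quantity is $\iint W_2f\,W_2g\,dtdx$, not $dtdx/t$, and the resulting embedding $N_{2,2}\hookrightarrow C_{2,2}^*$ is bounded, not isometric. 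The non-reflexivity argument is also correct in substance and can even be made self-contained: with $g_k$ the $C_{2,2}$-normalized indicator of the strip $\{2^{-k}\le t\le 2^{-k+1},\,|x|<1\}$ and $f$ the indicator of the unit Carleson box (which lies in $N_{2,2}$), one has $\iint f\,g_k\,dtdx\approx 1$ uniformly in $k$, so any weak-star cluster point $\Phi$ of the functionals $\iint\cdot\,g_k$ is nonzero yet annihilates everything supported in $\{t\ge\varepsilon\}$, hence is not given by any $L_2^{\loc}$ representer; there is no need to defer this point to \cite{HR}. Note only that $N_{2,2}$ is not separable, so you should take weak-star cluster points (or a Banach limit) rather than extract a weak-star convergent subsequence by sequential Banach--Alaoglu.

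The genuine gap is the hard inclusion $C_{2,2}(\R^{1+n}_+)^*\subseteq N_{2,2}(\R^{1+n}_+)$, which is precisely the content of \cite[Thm.\ 3.2]{HR} and which your sketch does not establish. The local Riesz representation producing $g\in L_2^{\loc}$ is fine, but ``the pairing inequality applied in reverse'' is not an argument, and testing $\Phi$ against \emph{indicator} functions of unions of Whitney cubes cannot work as stated: such tests only recover $L_1$ Whitney averages of $g$, hence control $N(W_1 g)$, whereas the space is defined through the $L_2$ averages $W_2 g$. What is needed is a norming construction adapted to $g$: on each selected Whitney region one must take $h$ proportional to $\bar g$ normalized in $L_2$ of that region (so that the pairing returns $\|g\|_{L_2(W)}$), the regions must be chosen by a stopping-time/Vitali-type selection where the nontangential supremum $N(W_2g)$ is essentially attained (with a measurability issue to handle, since the supremum is over an open cone), and one must then verify that the assembled test function has Carleson norm $\|C(W_2h)\|_2\lesssim 1$, which is where the equivalence $\|Cg\|_2\approx\|Ag\|_2$ of \cite{CMS} and a careful bounded-overlap argument enter. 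Without this construction the inequality $\|N(W_2g)\|_2\lesssim\|\Phi\|_{C_{2,2}^*}$ is unsupported, so as written the proposal proves the embedding and the strictness statement but not the duality identification itself.
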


By Proposition~\ref{prop:duality} and \eqref{eq:l2locestsforn} we have  continuous inclusions
$$
  L_2(\R^{1+n}_+,t^{-1})\subset N_{2,2}(\R^{1+n}_+)\qquad \text{and}\qquad
  C_{2,2}(\R^{1+n}_+)\subset  L_2(\R^{1+n}_+,t).
$$

The estimates for the singular integral $S$ are as follows. 

\begin{thm}   \label{thm:Sest}
If $|\alpha|<1$, then 
$
  S: L_2(\R^{1+n}_+, t^\alpha)\to L_2(\R^{1+n}_+,t^\alpha)
$
is a bounded operator. 
For $\alpha=\pm 1$, we have bounded operators
$
  S: L_2(\R^{1+n}_+, t^{-1})\to N_{2,2}(\R^{1+n}_+)
$
and
$
  S:  C_{2,2}(\R^{1+n}_+)\to  L_2(\R^{1+n}_+,t).
$
\end{thm}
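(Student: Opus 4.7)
My plan is to address the three claims in sequence, relying on \cite{AA1} for the first two and giving a new duality argument for the third. For $|\alpha|<1$, the operator-valued symbol $F$ is uniformly bounded on $L_2(\R^{1+n}_+, t^\alpha)$ and $DB_0$ has a bounded operator-valued holomorphic functional calculus on this space, so $S = F(DB_0)$ is bounded; this was proved in \cite{AA1}. The endpoint estimate $S: L_2(\R^{1+n}_+, t^{-1}) \to N_{2,2}(\R^{1+n}_+)$ at $\alpha = -1$ was also established there, via tent-space / non-tangential maximal function estimates on the kernel of $S$. I quote both of these without reproof.

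The new content is the remaining estimate $S_A: C_{2,2}(\R^{1+n}_+) \to L_2(\R^{1+n}_+, t)$ at $\alpha = +1$, which my plan is to extract by duality from the $\alpha=-1$ bound via Proposition~\ref{prop:duality}. First, Fubini applied to the double integral defining $S_A$ identifies its formal adjoint $T = S_A^*$ in the unweighted $L_2(\R^{1+n}_+)$ pairing as
\begin{equation*}
  (Tg)_s = \int_0^s \Lambda^* e^{-(s-t)\Lambda^*}(E_0^-)^* g_t\, dt + \int_s^\infty \Lambda^* e^{-(t-s)\Lambda^*}(E_0^+)^* g_t\, dt,
\end{equation*}
which has the same form as $S$ but is built from the bisectorial operator $(DB_0)^* = B_0^*D$ with the spectral projections $E_0^\pm$ interchanged. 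Since $B_0^*D$ is similar via $B_0^*$ to $DB_0^*$, and $B_0^*$ inherits the accretivity of $B_0$, the $\alpha=-1$ bound applied to $T$ yields $\|Tg\|_{N_{2,2}} \lesssim \|g\|_{L_2(\R^{1+n}_+, t^{-1})}$.

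To conclude, I would use that the $C_{2,2}$-$N_{2,2}$ duality from Proposition~\ref{prop:duality} is implemented by the same $L_2(\R^{1+n}_+)$ pairing: for $f \in C_{2,2}$ and $g \in L_2(\R^{1+n}_+, t^{-1})$,
\begin{equation*}
  |\langle S_Af, g\rangle_{L_2(\R^{1+n}_+)}| = |\langle f, Tg\rangle_{L_2(\R^{1+n}_+)}| \leq \|f\|_{C_{2,2}} \|Tg\|_{N_{2,2}} \lesssim \|f\|_{C_{2,2}} \|g\|_{L_2(\R^{1+n}_+, t^{-1})}.
\end{equation*}
Since $L_2(\R^{1+n}_+, t^{-1})^* = L_2(\R^{1+n}_+, t)$ under this pairing, taking the supremum over $g$ in the unit ball of $L_2(\R^{1+n}_+, t^{-1})$ delivers the required estimate $\|S_Af\|_{L_2(\R^{1+n}_+, t)} \lesssim \|f\|_{C_{2,2}}$.

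The main obstacle I foresee is the bookkeeping around the adjoint: one must verify that the $\alpha=-1$ estimates from \cite{AA1} are insensitive both to the swap $E_0^+ \leftrightarrow E_0^-$ and to the passage from $B_0$ to $B_0^*$. A secondary technical point is that $S_Af$ is \emph{a priori} defined only for $f \in L_2(\R^{1+n}_+, t^\beta)$ with $|\beta|<1$, so the argument should first be carried out on a dense subclass such as $C_c^\infty(\R^{1+n}_+) \cap C_{2,2}$ and then extended by continuity to all of $C_{2,2}$.
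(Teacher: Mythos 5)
Your proposal is correct, but at the endpoint $\alpha=+1$ (the only genuinely new part of the theorem, the other two claims being quoted from \cite{AA1} both by you and by the paper) it takes a different route from the paper. The paper does not dualize $S$ wholesale: it splits the truncations as $S_\epsilon=F^3_\epsilon(DB_0)+F^4_\epsilon(DB_0)$ (observing $F^3_\epsilon=(F^1_\epsilon)^*$ and $F^4_\epsilon=(F^2_\epsilon)^*$), treats $F^3$ directly by the same Schur and square function estimates as in the case $|\alpha|<1$, giving $F^3(DB_0):L_2(\R^{1+n}_+,t)\to L_2(\R^{1+n}_+,t)$, and uses the Carleson duality of \cite{HR} only for the factor $C_{2,2}(\R^{1+n}_+)\to L_2(\R^n)$ of $F^4$, via the non-tangential maximal bound $\|e^{-s\Lambda^*}(E_0^+)^*\phi\|_{N_{2,2}}\lesssim\|\phi\|_2$ for the adjoint operator. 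You instead quote the full $\alpha=-1$ theorem for the adjoint-type operator $T$ and dualize globally; this is legitimate, since only the inequality direction of the $N_{2,2}$--$C_{2,2}$ duality is needed (each $h\in N_{2,2}$ induces a functional on $C_{2,2}$ of comparable norm, so the non-reflexivity noted in the paper is harmless), and since $\|Sf\|_{L_2(\R^{1+n}_+,t)}$ is recovered by testing against compactly supported $g$ in the unit ball of $L_2(\R^{1+n}_+,t^{-1})$. Your route buys brevity, but it needs the two verifications you rightly flag: (a) the $\alpha=-1$ estimate must hold for $T$, i.e.\ for the operator built from $B_0^*D$ with $E_0^\pm$ interchanged --- this is routine, since conjugation by the $t$-independent, bounded and invertible $B_0^*$ (which preserves both $L_2(\R^{1+n}_+,t^{-1})$ and $N_{2,2}(\R^{1+n}_+)$) reduces $B_0^*D$ to $DB_0^*$, and the swap of $E_0^\pm$ is absorbed either by the manifest $\pm$-symmetry of the argument in \cite{AA1} or by conjugating with the reflection $(v_\no,v_\ta)\mapsto(v_\no,-v_\ta)$, which sends $D$ to $-D$ and preserves accretivity; and (b) the adjoint identity should be run on the truncations $S_\epsilon$, whose kernels avoid the diagonal, rather than on $S$ itself (the kernel $\Lambda e^{-|t-s|\Lambda}$ is not absolutely integrable across the diagonal, so your ``Fubini applied to the double integral defining $S_A$'' is only formal); one checks that $S_\epsilon^*$ is the matching truncation of $T$ and passes to the limit for $f,g$ compactly supported in $t$, and the density of such $f$ in $C_{2,2}(\R^{1+n}_+)$ (\cite[Lem.~2.5]{HR}), which the paper also invokes, then defines $S$ on $C_{2,2}(\R^{1+n}_+)$ consistently with the paper's strong limit $S_\epsilon\to S$. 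What the paper's finer splitting buys in return is precisely this strong convergence statement and the separate bound for $F^3$ on $L_2(\R^{1+n}_+,t)$, beyond the bare boundedness asserted in the theorem.
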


This result was proved in \cite{AA1}, with the exception that the Carleson space $C_{2,2}(\R^{1+n}_+)$
was not known there and in the endpoint case $\alpha=+1$ only the estimate
$$
  \|S\E\|_{L_2(\R^{1+n}_+, t)\to L_2(\R^{1+n}_+, t)}\lesssim \|\E\|_*
$$
was proved. We here survey the proof from \cite{AA1} and supply the missing estimates of 
$S$ and $\E$ separately in the case $\alpha=1$.

\begin{proof}
  First recall the rigorous definition of the singular integral $S$ from \\ \cite[Sec. 6,7]{AA1}.
  For fixed $\epsilon>0$, define truncated singular integral operators
$$
  S_\epsilon f_t
  := \int_0^t \eta_\epsilon^+(t,s) \Lambda e^{-(t-s)\Lambda} E_0^+ f_s ds + \int_t^\infty \eta^-_\epsilon(t,s) \Lambda e^{-(s-t)\Lambda} E_0^- f_s ds,
$$
where $\eta_\epsilon^\pm$ are compactly supported approximations of the characteristic functions 
of the triangles $\sett{(t,s)}{0<s<t}$ and $\sett{(t,s)}{0<t<s}$.
Then $S_\epsilon: L_1^\loc(\R_+; L_2(\R^n))\to L_\infty^c(\R_+; L_2(\R^n))$ is a well defined operator.
More precisely, we define $\eta^0(t)$ to be the piecewise linear continuous function with support
$[1,\infty)$, which equals $1$ on $(2,\infty)$ and is linear on $(1,2)$.
Then let $\eta_\epsilon(t):= \eta^0(t/\epsilon)(1- \eta^0(2\epsilon t))$ and
$\eta_\epsilon^\pm(t,s):= \eta^0(\pm (t-s)/\epsilon) \eta_\epsilon(t)\eta_\epsilon(s)$.

For $|\alpha|<1$, it was proved in \cite[Thm. 6.5]{AA1} that $S_\epsilon$ are uniformly bounded
and converge strongly to an operator $S$ in $L_2(\R^{1+n}_+,t^\alpha)$.
The idea of proof was to view $S_\epsilon$ as being constructed from the underlaying operator $DB_0$ in 
$L_2(\R^{1+n}_+,t^\alpha)$ by applying the operator-valued symbol $\lambda\mapsto F_\epsilon(\lambda)$, where
$$
  F_\epsilon(\lambda)f_t  := \int_0^t \eta_\epsilon^+(t,s) |\lambda| e^{-(t-s)|\lambda|} \chi_+(\lambda) f_s ds + \int_t^\infty \eta^-_\epsilon(t,s) |\lambda| e^{-(s-t)|\lambda|} \chi_-(\lambda) f_s ds,
$$
yielding $S_\epsilon= F_\epsilon(DB_0)$.
It was shown by Schur estimates that 
$$
\sup_{\epsilon>0, \lambda\in S_{\mu}}\|F_\epsilon(\lambda)\|_{L_2(\R^{1+n}_+,t^\alpha)\to L_2(\R^{1+n}_+,t^\alpha)}<\infty,
$$
for any $\omega<\mu<\pi/2$. 
Moreover, for any fixed $\epsilon>0$ there is decay
$$
\lim_{S_\mu\ni\lambda\to 0, \infty}\|F_\epsilon(\lambda)\|_{L_2(\R^{1+n}_+,t^\alpha)\to L_2(\R^{1+n}_+,t^\alpha)}=0,
$$
and $F_\epsilon(\lambda)f\to F(\lambda)f$ for each $\lambda\in S_\mu$ and $f\in L_2(\R^{1+n}_+,t^\alpha)$ as $\epsilon \to 0$.
As shown in \cite[Sec. 6.1]{AA1}, from this and square function estimates for $DB_0$ it follows that 
$S_\epsilon= F_\epsilon(DB_0)$ are uniformly bounded operators in $L_2(\R^{1+n}_+,t^\alpha)$ which 
converge strongly to $S= F(DB_0)$.

At the end point space $\alpha=-1$, 
the above bounds fail on $L_2(\R^{1+n}_+, t^{-1})$ and we split the operator as 
$$
  S_\epsilon= F^1_\epsilon(DB_0) + F^2_\epsilon(DB_0),
$$
where 
\begin{multline*}
  F^1_\epsilon(\lambda)f_t  := \int_0^t \eta_\epsilon^+(t,s) |\lambda| e^{-(t-s)|\lambda|} \chi_+(\lambda) f_s ds \\
  + \int_t^\infty \eta^-_\epsilon(t,s) |\lambda| (e^{-(s-t)|\lambda|}- e^{-(s+t)|\lambda|}) \chi_-(\lambda) f_s ds \\
  -\int_0^{t+2\epsilon}(\eta_{\epsilon}(t)\eta_\epsilon(s)-\eta^-_\epsilon(t,s))|\lambda| e^{-(s+t)|\lambda|} \chi_-(\lambda) f_s ds
\end{multline*}
and 
$$
  F_\epsilon^2(\lambda)f_t:= \eta_\epsilon(t) e^{-t|\lambda|} \int_0^\infty \eta_\epsilon(s) |\lambda| e^{-s|\lambda|} \chi_-(\lambda) f_s ds.
$$
On one hand, the term $F^1_\epsilon$ can be treated as in the case $|\alpha|<1$, yielding a bounded operator
$F^1(DB_0): L_2(\R^{1+n}_+, t^{-1})\to L_2(\R^{1+n}_+, t^{-1})$.
On the other hand, the term $F^2_\epsilon$ factorizes as a bounded operator
$$
  L_2(\R^{1+n}_+, t^{-1}) \to L_2(\R^n)\to N_{2,2}(\R^{1+n}_+),
$$
where the second factor does not converge strongly in $\mL(L_2(\R^n),  N_{2,2}(\R^{1+n}_+))$ but only 
in $\mL(L_2(\R^n), L_2(a,b; L_2(\R^n))$ for fixed but arbitrary $0<a<b<\infty$.

Now finally consider the end point space $\alpha=+1$. Here the appropriate splitting is 
$$
  S_\epsilon= F^3_\epsilon(DB_0) + F^4_\epsilon(DB_0),
$$
where 
\begin{multline*}
  F^3_\epsilon(\lambda)f_t  := 
  \int_0^t \eta^+_\epsilon(t,s) |\lambda| (e^{-(t-s)|\lambda|}- e^{-(t+s)|\lambda|}) \chi_+(\lambda) f_s ds \\
  -\int_{t-2\epsilon}^\infty (\eta_{\epsilon}(t)\eta_\epsilon(s)-\eta^+_\epsilon(t,s))|\lambda| e^{-(t+s)|\lambda|} \chi_+(\lambda) f_s ds \\
  + \int_t^\infty \eta^-_\epsilon(t,s) |\lambda| e^{-(s-t)|\lambda|} \chi_-(\lambda) f_s ds
\end{multline*}
and 
$$
  F_\epsilon^4(\lambda)f_t:= \eta_\epsilon(t) |\lambda| e^{-t|\lambda|} \int_0^\infty \eta_\epsilon(s) e^{-s|\lambda|} \chi_+(\lambda) f_s ds.
$$
(Note the duality $F^3_\epsilon= (F^1_\epsilon)^*$ and $F^4_\epsilon= (F^2_\epsilon)^*$.)
On one hand, the term $F^3_\epsilon$ can be treated as in the case $|\alpha|<1$, yielding a bounded operator
$F^3(DB_0): L_2(\R^{1+n}_+, t)\to L_2(\R^{1+n}_+, t)$.
On the other hand, the term $F^4_\epsilon$ factorizes as a bounded operator
$$
  C_{2,2}(\R^{1+n}_+)\to L_2(\R^n)\to L_2(\R^{1+n}_+, t).
$$
The bounds of the second factor follow directly from square function estimates, whereas
the bounds of the first factor follow by duality from the non-tangential maximal estimates used in the
case $\alpha=-1$. We have
\begin{multline*}
  \left(\phi,\int_0^\infty \eta_\epsilon(s)  e^{-s\Lambda} E_0^+ f_s ds \right)
  = \int_0^\infty(  e^{-s\Lambda^*} ((E_0^+)^* \phi),  f_s)\eta_\epsilon(s) ds \\
  \lesssim  \|e^{-s\Lambda^*} ((E_0^+)^* \phi)\|_{N_{2,2}(\R^{1+n}_+)} \|f\|_{C_{2,2}(\R^{1+n}_+)}\lesssim 
  \|\phi\|_2 \|f\|_{C_{2,2}(\R^{1+n}_+)}.
\end{multline*}
Note that in this case, both factors converge strongly as $\epsilon\to 0$, using that compactly supported functions in $\R^{1+n}_+$ are dense in $C_{2,2}(\R^{1+n}_+)$. Proof of this density result is in \cite[Lem. 2.5]{HR}.
Thus $S_\epsilon\to S$ strongly as operators $C_{2,2}(\R^{1+n}_+)\to L_2(\R^{1+n}_+, t)$.
\end{proof}

For a multiplier $\E$, clearly
$$
  \sup_{\|f\|_{L_2(\R^{1+n}_+,t^\alpha)}=1} \|\E f\|_{L_2(\R^{1+n}_+,t^\alpha)}
  = \|\E\|_{L_\infty(\R^{1+n}_+)}
$$
for any $\alpha$. For $\alpha=\pm 1$ we have the following more refined Carleson multiplier estimates.
Define the Carleson--Dahlberg norm 
$$
    \|\E\|_*:=\| C W_\infty(\tfrac {\E^2}t) \|_\infty^{1/2},
$$
using $L_\infty$-Whitney averages $W_\infty g(t,x):=\|g\|_{L_\infty(W(t,x))}$.

\begin{thm}
The following are equivalent.
\begin{itemize}
\item[{\rm (i)}] $\E: f(t,x)\to \E(t,x)f(t,x)$ is bounded $N_{2,2}(\R^{1+n}_+)\to L_2(\R^{1+n}_+,t^{-1})$.
\item[{\rm (ii)}] $\E: f(t,x)\to \E(t,x)f(t,x)$ is bounded $L_2(\R^{1+n}_+,t)\to C_{2,2}(\R^{1+n}_+)$.
\item[{\rm (iii)}] $\E$ has the Carleson--Dahlberg estimate $\|\E\|_*<\infty$.
\end{itemize}
If this hold, then 
$$
\|\E\|_*\approx \sup_{\|f\|_{N_{2,2}(\R^{1+n}_+)}=1}\|\E f\|_{L_2(\R^{1+n}_+,t^{-1})}\approx \sup_{\|f\|_{L_2(\R^{1+n}_+,t)}=1}\|\E f\|_{C_{2,2}(\R^{1+n}_+)}.
$$
\end{thm}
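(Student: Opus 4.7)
The approach is to establish the three-way equivalence by the cycle (iii)$\Rightarrow$(i), (iii)$\Rightarrow$(ii), and the converse (i)$\Rightarrow$(iii) (with (ii)$\Rightarrow$(iii) by a symmetric testing argument), tracking the constants to get the norm equivalence. The forward directions will exploit the Fubini identity $\iint|g|^2 t^\alpha\,dtdx\approx\iint(W_2 g)^2 t^\alpha\,dtdx$ (which holds for any $\alpha$ by the bounded overlap and comparable size of Whitney regions) together with the classical Carleson measure embedding; the converse will use testing on characteristic functions of Carleson tents.

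For (iii)$\Rightarrow$(i), combine the Fubini identity with the pointwise bound $W_2(\mathcal E f)(t,x)\leq W_\infty\mathcal E(t,x)\,W_2 f(t,x)$ to get
\begin{equation*}
\|\mathcal E f\|_{L_2(t^{-1})}^2 \approx \iint W_2(\mathcal E f)^2 t^{-1}\,dtdx \lesssim \iint (W_\infty\mathcal E)^2 (W_2 f)^2 t^{-1}\,dtdx.
\end{equation*}
Condition (iii) says exactly that $d\mu := (W_\infty\mathcal E)^2 t^{-1}\,dtdx$ is a Carleson measure with $\mu(T(B))\lesssim\|\mathcal E\|_*^2|B|$. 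The classical Carleson embedding inequality $\iint F\,d\mu \lesssim \|\mathcal E\|_*^2\|NF\|_{L_1(\R^n)}$, applied to $F=(W_2 f)^2$ (so $NF=(NW_2 f)^2$), yields $\|\mathcal E f\|_{L_2(t^{-1})}\lesssim\|\mathcal E\|_*\|f\|_{N_{2,2}}$.

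For (iii)$\Rightarrow$(ii), use the duality $(C_{2,2})^*=N_{2,2}$ from Proposition~\ref{prop:duality}. Since $C_{2,2}$ embeds isometrically into $C_{2,2}^{**}$, we have $\|h\|_{C_{2,2}}=\sup_{\|g\|_{N_{2,2}}\leq 1}|\langle h,g\rangle_{L_2}|$ for all $h\in C_{2,2}$. For compactly supported $f\in L_2(t)$ (dense by \cite[Lem. 2.5]{HR}), $\mathcal E f$ is compactly supported and hence in $C_{2,2}$; Cauchy--Schwarz and the (i) just established give
\begin{equation*}
|\langle \mathcal E f,g\rangle_{L_2}| = |\langle f,\mathcal E g\rangle_{L_2}| \leq \|f\|_{L_2(t)}\|\mathcal E g\|_{L_2(t^{-1})} \lesssim \|\mathcal E\|_*\|f\|_{L_2(t)}\|g\|_{N_{2,2}},
\end{equation*}
so $\|\mathcal E f\|_{C_{2,2}}\lesssim\|\mathcal E\|_*\|f\|_{L_2(t)}$, and density extends this to all of $L_2(t)$.

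The converse (i)$\Rightarrow$(iii) is the main obstacle. Given $B=B(x_0,r)$, I would first test (i) on $f=\chi_{T(\widetilde B)}$ with $\widetilde B$ a slight enlargement of $B$: a direct computation gives $\|f\|_{N_{2,2}}^2\approx|B|$ and hence $\iint_{T(B)}|\mathcal E|^2/t\,dtdx \lesssim \|\mathcal E\|_{\rm op}^2|B|$, so the measure $|\mathcal E|^2/t\,dtdx$ is Carleson. The hard step I expect to be the bottleneck is the final upgrade to the $W_\infty$-version demanded by (iii): the essential supremum $W_\infty\mathcal E$ can in principle dominate $|\mathcal E|$ without obstruction from the pointwise Carleson bound above. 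The standard remedy is to refine the test, covering $T(B)$ by Whitney regions $\{W(t_k,x_k)\}$ of bounded overlap, picking subsets $E_k\subset W(t_k,x_k)$ on which $|\mathcal E|\geq\tfrac12 W_\infty\mathcal E(t_k,x_k)$, and testing (i) on $f=\sum_k\chi_{E_k}$. Bounded overlap of the $W(t_k,x_k)$ keeps $\|f\|_{N_{2,2}}^2\lesssim|B|$, while $\|\mathcal E f\|_{L_2(t^{-1})}^2$ dominates a Riemann-sum approximation of $\iint_{T(B)}(W_\infty\mathcal E)^2/t\,dtdx$, and a limiting argument then delivers (iii) with $\|\mathcal E\|_*\lesssim\|\mathcal E\|_{\rm op}$; this is essentially the tent-space Carleson multiplier characterization of \cite{HR}. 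The implication (ii)$\Rightarrow$(iii) follows analogously by testing $\chi_{T(\widetilde B)}\in L_2(t)$ and using $\|\mathcal E\chi_{T(\widetilde B)}\|_{C_{2,2}}\geq|\langle\mathcal E\chi_{T(\widetilde B)},g\rangle_{L_2}|/\|g\|_{N_{2,2}}$ for a suitably chosen $g\in N_{2,2}$. Tracking the constants throughout then yields the stated norm equivalence.
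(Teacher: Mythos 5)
Your two forward implications are correct, and they are in fact more self-contained than the paper's treatment: the paper simply quotes \cite[Thm. 3.1]{HR} for the equivalence of (i) and (iii), and obtains (i)$\Leftrightarrow$(ii) in one line from the $N_{2,2}$--$C_{2,2}$ duality by observing that multiplication by $\mathcal{E}$ from $L_2(\R^{1+n}_+,t)$ to $C_{2,2}(\R^{1+n}_+)$ and multiplication by $\mathcal{E}^*$ from $N_{2,2}(\R^{1+n}_+)$ to $L_2(\R^{1+n}_+,t^{-1})$ are adjoint under the $L_2$ pairing. Your compact-support-plus-density argument is exactly the right way to handle the non-reflexivity ($N_{2,2}^*\supsetneqq C_{2,2}$, so one cannot read off membership in $C_{2,2}$ from a functional bound alone); the only slip there is that for matrix-valued $\mathcal{E}$ the pairing produces $\mathcal{E}^*$, not $\mathcal{E}$, which is harmless since $\|\mathcal{E}^*\|_*=\|\mathcal{E}\|_*$.

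The genuine gap is in (i)$\Rightarrow$(iii), which is precisely the part the paper outsources to \cite[Thm. 3.1]{HR}. Testing on $f=\sum_k\chi_{E_k}$ only yields $\sum_k m_k^2\,|E_k|/t_k\lesssim\|\mathcal{E}\|_{\mathrm{op}}^2|B|$, where $m_k:=\|\mathcal{E}\|_{L_\infty(W(t_k,x_k))}$; since the sets $E_k$ on which $|\mathcal{E}|\ge m_k/2$ may have measure far smaller than $|W(t_k,x_k)|\approx t_k^{1+n}$, this does not dominate the Riemann sum $\sum_k m_k^2\,t_k^{n}$ that (iii) requires, so the claimed domination of $\iint_{T(B)}(W_\infty\mathcal{E})^2t^{-1}\,dtdx$ fails as stated (your preliminary test $\chi_{T(\widetilde B)}$ has the same defect, as you note: it only controls the measure $|\mathcal{E}|^2t^{-1}dtdx$). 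The repair uses exactly the feature of $N_{2,2}$ that makes the $W_\infty$-condition the correct one: normalize each bump in $L_2$ at Whitney scale, i.e.\ test with $f=\sum_k t_k^{(1+n)/2}|E_k|^{-1/2}\chi_{E_k}$, where $E_k$ is chosen inside a fixed enlargement $\widetilde W_k$ of $W(t_k,x_k)$ with $|\mathcal{E}|\ge\tfrac12\|\mathcal{E}\|_{L_\infty(\widetilde W_k)}$ on $E_k$, the enlargement being such that $W(s,y)\subset\widetilde W_k$, hence $W_\infty\mathcal{E}(s,y)\le\|\mathcal{E}\|_{L_\infty(\widetilde W_k)}$, for all $(s,y)$ in the Whitney cell around $(t_k,x_k)$. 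Each bump now contributes $t_k^{1+n}$ to $\|f\|_{L_2(W(s,y))}^2$, so bounded overlap still gives $W_2f\lesssim1$ and $\|f\|_{N_{2,2}}^2\lesssim|B|$, while $\|\mathcal{E}f\|_{L_2(t^{-1})}^2\gtrsim\sum_k\|\mathcal{E}\|_{L_\infty(\widetilde W_k)}^2\,t_k^{n}\gtrsim\iint_{T(B)}W_\infty(\mathcal{E}^2/t)\,dsdy$, which is what (iii) demands; this is in substance the argument behind \cite[Thm. 3.1]{HR}. Finally, (ii)$\Rightarrow$(iii) is obtained more cleanly via (ii)$\Rightarrow$(i) by taking adjoints (that direction of the duality is unproblematic) followed by (i)$\Rightarrow$(iii), rather than by a separate testing argument.
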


\begin{proof}
The equivalence of (i) and (iii) follows from \cite[Thm. 3.1]{HR}.
The equivalence of (i) and (ii) follows from Proposition~\ref{prop:duality} since
$\E^*: N_{2,2}(\R^{1+n}_+)\to L_2(\R^{1+n}_+,t^{-1})$ is the adjoint of
$\E: L_2(\R^{1+n}_+,t)\to C_{2,2}(\R^{1+n}_+)$ under the $L_2(\R^{1+n}_+)$ pairing.
\end{proof}

\section{Proof of the Cauchy formula}   \label{sec:proof}

In this section, we prove Theorem~\ref{thm:main}. The end point cases $\alpha=\pm 1$ were proved 
in \cite{AA1}. Thus it remains to show the results for $|\alpha|<1$. We remark though that 
in \cite{AA1}, in the case $\alpha=1$ it was only shown that $S\E: L_2(\R^{1+n}_+, t)\to L_2(\R^{1+n}_+, t)$.
With the intermediate Carleson space $C_{2,2}(\R^{1+n}_+)$ available now from \cite{HR},
we have the refined mapping result
$$
  \E:  L_2(\R^{1+n}_+, t)\to C_{2,2}(\R^{1+n}_+)\qquad \text{and}\qquad
  S: C_{2,2}(\R^{1+n}_+)\to L_2(\R^{1+n}_+, t),
$$
if $\|\E\|_*<\infty$, proved in Section~\ref{sec:ops}.

\begin{proof}[Proof of the representation formula \eqref{eq:inteqn}]
Assume that $|\alpha|<1$ and that $\divv_{t,x} A(t,x) \nabla_{t,x}u=0$
with estimates $\|f\|_{L_2(\R^{1+n}_+,t^\alpha)}<\infty$ of the conormal gradient $f:= \nabla_A u$.
As in \cite[proof of Thm. 8.2]{AA1}, for any $\epsilon>0$, integration of \eqref{eq:odepert} gives
\begin{multline*}
   S_\epsilon\E f_t=  \epsilon^{-1}\int_\epsilon^{2\epsilon} e^{-s\Lambda} (E_0^+ f_{t-s}+ E_0^- f_{t+s}) ds
  -e^{-t\Lambda} \epsilon^{-1} \int_\epsilon^{2\epsilon}  E_0^+ f_s ds \\ +
  \epsilon^{-1} \int_\epsilon^{2\epsilon} (e^{-t\Lambda} -e^{-(t-s)\Lambda} )E_0^+ f_s ds
  -2\epsilon \int_{(2\epsilon)^{-1}}^{\epsilon^{-1}} e^{-(s-t)\Lambda} E_0^- f_s ds=: I-II+III-IV,
\end{multline*}
with equality in $L_2(a,b; \mH)$ for any fixed $0<a<b<\infty$,
where $I$ converges to $f$ in $L_2(a,b; \mH)$.
Using estimates $\|e^{-t\Lambda} -e^{-(t-s)\Lambda}\|\lesssim s$ for III and $\|e^{-(s-t)\Lambda}\|\lesssim 1$
for IV, we obtain
$$
  \|III\|_2 \lesssim \epsilon^{(1-\alpha)/2}\to 0\qquad\text{and}\qquad \|IV\|_2\lesssim \epsilon^{(1+\alpha)/2}\to 0,
$$
as $\epsilon\to 0$, for each fixed $a\le t\le b$.
Since $S_\epsilon \E f$ converges in $L_2(\R^{1+n}_+, t^\alpha)$ by \cite[Thm. 6.5]{AA1}, it follows from the
equation that II converges in $L_2(a,b; \mH)$ for any fixed $0<a<b<\infty$.

Write $\tilde f_t:= \lim_{\epsilon\to 0} e^{-t\Lambda} \epsilon^{-1} \int_\epsilon^{2\epsilon}  E_0^+ f_s ds$.
Since $\tilde f= f-S\E f$ by the equation, $\|\tilde f\|_{L_2(\R^{1+n}_+, t^\alpha)}\lesssim \|f\|_{L_2(\R^{1+n}_+, t^\alpha)}<\infty$.
Using the identity $I = 4\int_0^\infty (s\Lambda e^{-s\Lambda})^2 ds/s$ on $\mH$, and $e^{-t\Lambda}\tilde f_s= \tilde f_{t+s}$ so that $\|\tilde f_{t+s}\|_2\lesssim \|\tilde f_s\|_2$, we estimate
\begin{multline*}
   ((\Lambda^*)^{-\sigma}\phi, \tilde f_t)= 4\int_0^\infty ((s\Lambda^*)^{2-\sigma} e^{-s\Lambda^*}\phi, s^{\sigma} e^{-s\Lambda} \tilde f_t) \frac{ds}s \lesssim \\
   \|\phi\|_2 \left( \int_0^\infty \|\tilde f_{t+s}\|^2 s^\alpha ds\right)^{1/2} \lesssim \|\phi\|_2 \|f\|_{L_2(\R^{1+n}_+, t^\alpha)},
\end{multline*}
for any $\phi\in\dom((\Lambda^*)^{-\sigma})$.
Thus $\tilde f_t= \Lambda^\sigma h_t$ with $\sup_{t>0}\|h_t\|_2\lesssim \|f\|_{L_2(\R^{1+n}_+, t^\alpha)}$.
We now want to let $s\to 0$ in the identity
$$
  \tilde f_{t+s}= e^{-t\Lambda}\tilde f_s= \Lambda^\sigma e^{-t\Lambda} h_s.
$$
Integrating against a test function $\phi\in L_2(a,b; \mH)$, we get
$$
  \int_a^b (\phi_t, \tilde f_{t+s}) dt= \left( \int_a^b \Lambda^* e^{-t\Lambda^*}\phi_t dt, h_s \right).
$$
By continuity of translations in $L_2(a,b;\mH)$, the left hand side converges. 
Since functions of the form $\int_a^b \Lambda^* e^{-t\Lambda^*}\phi_t dt$ are dense in $L_2(\R^n)$ and
$h_s$ are uniformly bounded, $h_s\to h$ weakly in $L_2(\R^n)$ when $s\to 0$.
We conclude that 
$f_t-S\E f_t= \tilde f_t= \Lambda^\sigma e^{-t\Lambda} h$ in $L_2(\R^{1+n}_+,t^\alpha)$.
\end{proof}

\begin{proof}[Proof of the estimate \eqref{eq:supest}]
 By equation  \eqref{eq:inteqn} it suffices to estimate the weakly singular integral operator
 \begin{multline*}
   \tS f_t := \int_0^t \Lambda^{1-\sigma} e^{-(t-s)\Lambda} E_0^+ f_s ds + \int_t^\infty \Lambda^{1-\sigma} e^{-(s-t)\Lambda} E_0^- f_s ds \\
   = \int_{t/2<s<2t} \Lambda^{1-\sigma} e^{-|s-t|\Lambda} E_0^{\sgn(t-s)}f_s ds
   + \int_0^{t/2}\Lambda^{1-\sigma} e^{-(t-s)\Lambda}(I-e^{-2s\Lambda}) E_0^+ f_s ds  \\
   + \int_{2t}^{\infty}\Lambda^{1-\sigma} e^{-(s-t)\Lambda}(I-e^{-2t\Lambda}) E_0^- f_s ds  
   + e^{-t\Lambda}\int_{\R\setminus [t/2,2t]} \Lambda^{1-\sigma} e^{-s\Lambda} E_0^{\sgn(t-s)} f_s ds \\
   =: I+II+III+IV.
\end{multline*}
We first estimate IV by duality.
For $\phi\in L_2(\R^n)$, we have
$$
  |(IV, \phi)|\lesssim \int_0^\infty \| (s\Lambda)^{1-\sigma} e^{-s\Lambda^*} \phi \|_2 \|s^\sigma f_s\|_2 \frac {ds}s\lesssim \|\phi\|_2 \|f\|_{L_2(\R^{1+n}_+, t^\alpha)},
$$
so that $\sup_{t>0}\|IV\|_2\lesssim \|f\|_{L_2(\R^{1+n}_+, t^\alpha)}$.
To estimate II, we note that 
\begin{multline*}
  \|\Lambda^{1-\sigma} e^{-(t-s)\Lambda}(I-e^{-2s\Lambda})\| \\
  =  \| (s/(t-s)^{2-\sigma}) ((t-s) \Lambda)^{2-\sigma} e^{-(t-s)\Lambda}(I-e^{-2s\Lambda})/(s\Lambda) \| 
  \lesssim s/t^{2-\sigma}.
\end{multline*}
This gives
\begin{multline*}
  \sup_{t>0}\|II\|_2\lesssim \int_0^{t/2} s/t^{2-\sigma}\|f_s\|_2 ds  \\
  \lesssim \left( \int_0^{t/2} s^{2-\alpha} t^{2\sigma-4} ds\right)^{1/2} \|f\|_{L_2(\R^{1+n}_+, t^\alpha)}\lesssim  \|f\|_{L_2(\R^{1+n}_+, t^\alpha)}.
\end{multline*}
A similar estimate applies to III. We are left with the local weakly singular integral I, which we estimate
\begin{multline*}
  \|I\|_2\lesssim \int_{t/2<s<2t} \frac {\|f_s\|_2 ds}{|t-s|^{1-\sigma}} \\
  \lesssim \left( t^{-\alpha} \int_{t/2}^{2t} \frac{ds}{|t-s|^{2-2\sigma}} \right)^{1/2}\|f\|_{L_2(\R^{1+n}_+, t^\alpha)}\lesssim \|f\|_{L_2(\R^{1+n}_+, t^\alpha)}
\end{multline*}
if $\alpha>0$. If $\alpha\le 0$, we at least obtain the weaker estimate
\begin{multline*}
  t^{-1}\int_{t}^{2t} \left(  \int_{u/2<s<2u} \frac {\|f_s\|_2 ds}{|u-s|^{1-\sigma}} \right)^2 du \\
  \lesssim 
   t^{-1}\int_{t}^{2t} \left(  \int_{u/2<s<2u} \frac { ds}{|u-s|^{1-\sigma}} \right)
   \left(  \int_{u/2<s<2u} \frac {\|f_s\|_2^2 ds}{|u-s|^{1-\sigma}} \right) du \\
\lesssim 
   t^{-1}\int_{t}^{2t} u^\sigma
   \left(  \int_{u/2<s<2u} \frac {\|f_s\|_2^2 ds}{|u-s|^{1-\sigma}} \right) du \\
   \lesssim 
   t^{-1}\int_{t/2}^{4t} 
   \left(  \int_{s/2<u<2s} \frac {u^{\sigma} du}{|u-s|^{1-\sigma}} \right)\|f_s\|_2^2 ds
   \lesssim \|f\|_{L_2(\R^{1+n}_+, t^\alpha)}.
\end{multline*}
This proves the estimate of $\|\tilde S f_t\|_2$ and therefore of \eqref{eq:supest}.
\end{proof}

\begin{proof}[Proof of Theorem~\ref{thm:main}]
We supply the remaining arguments in the case $|\alpha|<1$.
Having established the representation formula \eqref{eq:inteqn}, write this as
$$
  f_t = \Lambda^\sigma(e^{-t\Lambda} E_0^+ + \tS \E_t f_t).
$$
The estimates above for $\tS$ give the stated estimates for $\|\Lambda^{-\sigma} f_t\|_2$.
For the traces, it remains to prove that 
$$
  \lim_{t\to 0} (\tS\E f)_t= \int_0^\infty \Lambda^{1-\sigma} e^{-s\Lambda} E_0^-\E_s f_s ds=: h^-, \qquad
  \lim_{t\to \infty} (\tS\E f)_t= 0,
$$
either in $L_2(\R^n)$ or square Dini sense.
By the established uniform bounds, we may assume that $f_t\ne 0$ only if $a\le t\le b$ for some $0<a<b<\infty$.
In this case,
 $$
     \tS\E_t f_t = \int_{a<s<\min(t,b)} \Lambda^{1-\sigma}e^{-(t-s)\Lambda} E_0^+ \E_s f_s ds +
      \int_{\max(t,a)<s<b} \Lambda^{1-\sigma}e^{-(s-t)\Lambda} E_0^- \E_s f_s ds.
 $$
Since  
$$
\int_a^b \|\Lambda^{1-\sigma}e^{-(t-s)\Lambda}E_0^+ \E_s f_s\|_2 ds
\lesssim \int_a^b (t-s)^{\sigma-1}\|f_s\|_2 ds\to 0
$$ 
when $t\to \infty$ and 
$$
\int_a^b \|\Lambda^{1-\sigma}(e^{-(s-t)\Lambda}-e^{-s\Lambda})E_0^- \E_s f_s\|_2 ds
\lesssim \int_a^b t \|f_s\|_2 ds\to 0
$$
as $t\to 0$, we have proved the traces.

The converse result is obtained by reversing the argument leading to the 
 representation formula \eqref{eq:inteqn}, outlined in Section~\ref{sec:ops}.
 Note that square function estimates give
$$
  \int_0^\infty \|\Lambda^\sigma e^{-t\Lambda} E_0^+ h^+\|_2^2 t^\alpha dt
  = \int_0^\infty \|(t\Lambda)^\sigma e^{-t\Lambda} E_0^+ h^+\|_2^2 \frac{dt}t\lesssim \|h^+\|_2^2,
$$
and if $\|S\E\|_{L_2(\R^{1+n}_+, t^\alpha)\to L_2(\R^{1+n}_+, t^\alpha)}\lesssim \|\E\|_{L_\infty(\R^{1+n}_+)}<1$, then $I-S\E$ is invertible on $L_2(\R^{1+n}_+, t^\alpha)$.
\end{proof}

\begin{proof}[Proof of Proposition~\ref{prop:sob}]
Consider the operator 
$\Lambda^*= |B_0^* D|$.
From square function estimates and accretivity of $B_0$ it follows that
$$
 \| |B_0^*D| g \|_2 \approx \| B_0^* D g\|_2 \approx \|Dg\|_2 \approx \||D|g\|_2,
$$
where 
$$
  |D|= \begin{bmatrix} (-\Delta_\ta)^{1/2} & 0 \\ 0 & (-\nabla_\ta \divv_\ta)^{1/2} \end{bmatrix}.
$$
Thus $\dom(\Lambda^*)= \dom(|D|)$, and interpolation shows that $\dom((\Lambda^*)^\sigma)= \dom(|D|^\sigma)$ with equivalence of norms.
See remark following \cite[Thm. 4.2]{AMcNhol}, where interpolation between homogeneous norms
gives the estimate $\|(\Lambda^*)^\sigma g\|_2\approx \||D|^\sigma g\|_2$,
for all $g\in \dom((\Lambda^*)^\sigma)= \dom(|D|^\sigma)$. 

By duality, we have for $f\in \dom(\Lambda^{-\sigma})\cap \mH$ that
\begin{multline*}
  \|\Lambda^{-\sigma} f\|_2= \sup_{g\in \dom((\Lambda^*)^\sigma), \|(\Lambda^*)^\sigma g\|_2=1}
  (\Lambda^{-\sigma} f,(\Lambda^*)^\sigma g) \\
  \approx \sup_{g\in \dom(|D|^\sigma), \||D|^\sigma g\|_2=1} (f,g)=
  \|f\|_{\dot H^{-\sigma}},
\end{multline*}
since $\||D|^\sigma g\|_2= \|g\|_{\dot H^\sigma}$ for $g\in \mH$ and $|D|^\sigma g=0$ for $g\in \mH^\perp$. 
\end{proof}

\section{Applications to the Neumann and Dirichlet problem}   \label{sec:bvp}

It is important to note that in the previous sections, we have always worked with the quantity
$$
  f= \nabla_A u= \begin{bmatrix} \pd_{\nu_A} u \\ \nabla_\ta u \end{bmatrix},
$$
the conormal gradient of a solution $u$, as a whole. 
On the contrary, for the Neumann and Dirichlet problems we need to work with the two 
components $\pd_{\nu_A}u$, the Neumann datum, and $\nabla_\ta u$, the Dirichlet datum, separately.
In doing so, we leave the functional calculus of the bisectorial operator $DB_0$ and go beyond 
the Cauchy integral.

Consider the function spaces
$$
  L_2^\alpha(\R^{1+n}_+):= \begin{cases} L_2(\R^{1+n}_+, t^{\alpha}), & \alpha\in (-1,1], \\
  N_{2,2}(\R^{1+n}_+), & \alpha= -1. \end{cases}
$$
From Theorem~\ref{thm:main} and Proposition~\ref{prop:sob}, it follows that
there is a well defined and bounded trace map 
$$
  \nabla_A u(t,x)\mapsto \nabla_A u(0,x)
$$
taking solutions $u$ of $\divv_{t,x} A(t,x) \nabla_{t,x} u=0$ with  $\nabla_A u\in L_2^\alpha(\R^{1+n}_+)$, to $\nabla_A u|_{\R^n}\in \dot H^{-\sigma}(\R^n)$.
This is true for any bounded accretive coefficients $A$ when $|\alpha|<1$, and in the endpoint cases
$\alpha=\pm 1$ we need to impose the Carleson--Dahlberg condition $\|A(t,x)-A(0,x)\|_*<\infty$.
If furthermore we assume smallness of $\|A(t,x)-A(0,x)\|_\infty$ in the case $|\alpha|<1$, or 
smallness of $\|A(t,x)-A(0,x)\|_*$ in the case $\alpha=\pm 1$, then 
we have equivalence of norms
$$
  \|\nabla_A u|_{\R^n}\|_{\dot H^{-\sigma}(\R^n)}\approx
  \|\nabla_A u\|_{L_2^\alpha(\R^{1+n}_+)}.
$$
In this case, the Hardy type subspace 
$$
  E_A^+ \dot H^{-\sigma}(\R^n):= \{\nabla_A u|_{\R^n}\}\subset \dot H^{-\sigma}(\R^n)
$$
is a closed subspace of $\dot H^{-\sigma}(\R^n)$.
We make the following definitions.

\begin{defn}   \label{defn:wp}
  Let $\alpha\in [-1,1]$ and $\sigma= (\alpha+1)/2\in [0,1]$. 
  Consider bounded and accretive coefficients $A(t,x)$, and assume smallness of $A(t,x)-A(0,x)$
  in the above sense depending on $\alpha$.

  Let $WP(Neu, \dot H^{-\sigma})$ denote the set of coefficients $A$ 
  for which the Neumann map $E_A^+ \dot H^{-\sigma}(\R^n;\C^{(1+n)m})\to \dot H^{-\sigma}(\R^n; \C^m):\nabla_A u|_{\R^n}\mapsto (\nabla_A u|_{\R^n})_\no$ is an isomorphism, so that in particular it has lower bounds
$$
  \|\nabla_A u\|_{L_2^\alpha(\R^{1+n}_+)} \approx \|\nabla_A u|_{\R^n}\|_{\dot H^{-\sigma}(\R^n)}
   \lesssim
   \|\pd_{\nu_A} u|_{\R^n}\|_{\dot H^{-\sigma}(\R^n)}.
$$

  Let $WP(Dir, \dot H^{1-\sigma})$ denote the set of coefficients $A$ 
  for which the Dirichlet map $E_A^+ \dot H^{-\sigma}(\R^n;\C^{(1+n)m})\to \dot H^{-\sigma}(\R^n; \C^{nm}):\nabla_A u|_{\R^n}\mapsto (\nabla_A u|_{\R^n})_\ta$ is an isomorphism, so that in particular it has lower bounds
$$
  \|\nabla_A u\|_{L_2^\alpha(\R^{1+n}_+)} \approx \|\nabla_A u|_{\R^n}\|_{\dot H^{-\sigma}(\R^n)} \lesssim
   \|\nabla_\ta u |_{\R^n}\|_{\dot H^{-\sigma}(\R^n)}.
$$
\end{defn} 

When $A=A(x)$ are $t$-independent, the Hardy subspace $E_A^+ \dot H^{-\sigma}(\R^n)$ is the range
of projection $E_0^+= \chi_+(DB_0)$, which acts boundedly in $\dot H^{-\sigma}(\R^n)$.
We have that
$$
L_\infty\ni B_0\mapsto E_0^+\in\mL(\dot H^{-\sigma}(\R^n))
$$ 
is locally Lipschitz continuous.
More generally for $t$-dependent coefficients, we see from Theorem~\ref{thm:main} that
$$
   E_A^+ g= E_0^+g +\int_0^\infty \Lambda e^{-s\Lambda} E_0^-\E_s \big( (I-S\E)^{-1} e^{-t\Lambda} E_0^+ g\big)_s ds
$$
is a projection onto the Hardy space $E_A^+ \dot H^{-\sigma}(\R^n)$ along 
the null space $E_0^- \dot H^{-\sigma}(\R^n)$.
In this case we have that
$$
  \|E_A^+-E_0^+\|_{ \dot H^{-\sigma}(\R^n)\to  \dot H^{-\sigma}(\R^n)}\lesssim 
  \begin{cases}
     \|\E\|_{L_\infty(\R^{1+n}_+)}, & |\alpha|<1, \\
     \|\E\|_*, & \alpha=\pm 1.
  \end{cases}
$$
Since in this way, the Hardy space of solutions depends continuously on the coefficients, 
we obtain the following perturbation result.

\begin{prop}
If we have a well posed boundary value problem for $t$-independent coefficients $A= A(x)\in WP(Neu, \dot H^{-\sigma})$,
then there exists $\epsilon>0$ such that $\tilde A\in WP(Neu, \dot H^{-\sigma})$ whenever
$\sup_{(t,x)\in \R^{1+n}_+}|\tilde A(t,x)-A(x)|<\epsilon$ when $|\alpha|<1$, and
whenever $\sup_{x\in \R^n} |\tilde A(0,x)-A(x)|<\epsilon$ and
$\|\tilde A(t,x)-\tilde A(0,x)\|_{*}<\epsilon$ when $\alpha=\pm 1$.

The corresponding result also holds for the Dirichlet problem.
\end{prop}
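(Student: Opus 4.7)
The plan is to combine the explicit continuity of the Hardy projection $E_A^+$ in the coefficients $A$, displayed just before the proposition, with a standard perturbation-of-projections argument that transfers well-posedness from $A$ to $\tilde A$.

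First I would translate the hypotheses on $\tilde A - A$ into smallness of the associated quantities. The map $A \mapsto B$ in \eqref{eq:defnB} is rational, with denominators bounded below by accretivity, hence locally Lipschitz in $L_\infty$. Thus if $|\alpha|<1$, the hypothesis $\|\tilde A - A\|_\infty < \epsilon$ yields both $\|\tilde B_0 - B\|_\infty \lesssim \epsilon$ and $\|\tilde\E\|_\infty = \|I - \tilde B_0^{-1}\tilde B\|_\infty \lesssim \epsilon$. In the endpoint cases $\alpha = \pm 1$, the two hypotheses separately give $\|\tilde B_0 - B\|_\infty \lesssim \epsilon$ and $\|\tilde\E\|_* \lesssim \epsilon$. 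In particular $\tilde A$ belongs to the class on which $WP$ is defined.

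Next, starting from
$$E_{\tilde A}^+ g = \tilde E_0^+ g + \int_0^\infty \Lambda e^{-s\Lambda} E_0^- \tilde\E_s \bigl((I - S\tilde\E)^{-1} e^{-t\Lambda}\tilde E_0^+ g\bigr)_s\, ds$$
and noting that for the $t$-independent $A$ we have $\E = 0$ and $E_A^+ = \chi_+(DB)$, I would combine Theorem~\ref{thm:main} and Theorem~\ref{thm:Sest} (to bound the integral term by $\|\tilde\E\|_\infty$ or $\|\tilde\E\|_*$) with the recorded local Lipschitz continuity $B_0 \mapsto \chi_+(DB_0) \in \mL(\dot H^{-\sigma})$ (to bound $\|\tilde E_0^+ - E_A^+\|$). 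This gives
$$\|E_{\tilde A}^+ - E_A^+\|_{\dot H^{-\sigma}\to\dot H^{-\sigma}} \longrightarrow 0 \quad \text{as}\quad \epsilon \to 0.$$

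Finally I would run the perturbation-of-projections argument. The similarity $T = P_1 P_2 + (I - P_1)(I - P_2)$ with $P_1 = E_{\tilde A}^+$, $P_2 = E_A^+$ satisfies $TP_2 = P_1 T$ and has $\|I - T\| \lesssim \|P_1 - P_2\|$, so for $\epsilon$ small it is invertible, and its restriction shows that $E_{\tilde A}^+|_{E_A^+\dot H^{-\sigma}} : E_A^+\dot H^{-\sigma} \to E_{\tilde A}^+\dot H^{-\sigma}$ is an isomorphism. By hypothesis the Neumann trace $N : E_A^+ \dot H^{-\sigma} \to \dot H^{-\sigma}(\R^n;\C^m)$, $f \mapsto f_\no$, is an isomorphism. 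The map $y \mapsto (E_{\tilde A}^+ y)_\no$ from $E_A^+\dot H^{-\sigma}$ into $\dot H^{-\sigma}(\R^n;\C^m)$ differs from $N$ by $y \mapsto ((E_{\tilde A}^+ - E_A^+)y)_\no$ of small operator norm, hence is itself an isomorphism. Composing with the inverse of $E_{\tilde A}^+|_{E_A^+\dot H^{-\sigma}}$ shows the Neumann trace for $\tilde A$ is an isomorphism, so $\tilde A \in WP(Neu,\dot H^{-\sigma})$. The Dirichlet case is identical with $f \mapsto f_\ta$ replacing $f \mapsto f_\no$.

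The main obstacle is the bookkeeping with two basepoints $B$ and $\tilde B_0$. The formula for $E_{\tilde A}^+$ is naturally anchored at $\tilde B_0 = \tilde B(0,\cdot)$ and not at $B$, so establishing $\|E_{\tilde A}^+ - E_A^+\|\to 0$ requires both the smallness of the integral term and the Lipschitz continuity of $B_0 \mapsto \chi_+(DB_0)$ to handle the change of basepoint; in the endpoint cases $\alpha = \pm 1$ the uniqueness of $B_0$ noted after Theorem~\ref{thm:main} makes the choice of basepoint forced, which keeps the bookkeeping consistent.
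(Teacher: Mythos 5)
Your proposal is correct and follows essentially the same route as the paper, which deduces the proposition from the two continuity facts stated just before it (local Lipschitz continuity of $B_0\mapsto \chi_+(DB_0)$ in $\mL(\dot H^{-\sigma})$ and the bound $\|E_A^+-E_0^+\|\lesssim\|\E\|_\infty$ resp. $\|\E\|_*$) combined with the standard stability of isomorphisms under small perturbations of the Hardy projection. One small simplification: when $|\alpha|<1$ the coefficients $B_0$ are not uniquely determined, so you may simply take $\tilde B_0:=B$ (legitimate since $\|I-B^{-1}\tilde B\|_\infty\lesssim\epsilon$), which avoids both the change-of-basepoint step and any appeal to a boundary trace $\tilde A(0,\cdot)$ that the hypothesis $\sup_{t>0,x}|\tilde A(t,x)-A(x)|<\epsilon$ does not by itself provide.
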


\begin{ex}
  The optimal case is when $\alpha=0$, in which case all coefficients belong to $WP(Neu, \dot H^{-1/2})$ and $WP(Dir, \dot H^{1/2})$.
  This is a simple consequence of Gauss' theorem, which yields
\begin{multline*}
  \iint_{\R^{1+n}_+} (A(t,x)\nabla_{t,x} u, \nabla_{t,x}u) \eta_\epsilon(t) dtdx \\=
  2\epsilon\int_{1/(2\epsilon)}^{1/\epsilon}\int_{\R^n} (\pd_{\nu_A}u_t, u_t) dxdt
   -\epsilon^{-1}\int_\epsilon^{2\epsilon}\int_{\R^n} (\pd_{\nu_A}u_t, u_t) dxdt,
\end{multline*}
with $\eta_\epsilon(t)$ as in the proof of Theorem~\ref{thm:Sest}.
Taking limits $\epsilon\to 0$, by accretivity of $A$ this gives the estimate
$$
  \int_0^\infty \|f_t\|^2_2 dt \lesssim \|(f_0)_\no\|_{\dot H^{-1/2}}  \|(f_0)_\ta\|_{\dot H^{-1/2}} 
$$
of the conormal gradient $f$.
Since $\max( \|(f_0)_\no\|_{\dot H^{-1/2}}  \|(f_0)_\ta\|_{\dot H^{-1/2}} )\approx \|f_0\|_{\dot H^{-1/2}}
\approx  \|f\|_{L_2(\R^{1+n}_+)}$, we can absorb either factor of the right hand side, on the left, 
and obtain the claimed lower bounds.
\end{ex}

\begin{ex}
Much more subtle are the endpoint cases $\alpha=\pm 1$.
The Neumann problem with data in $L_2(\R^n)$ ($\alpha=-1$) is usually denoted $(N)_2$ in the literature,
and the Dirichlet problem with data in $\dot H^1(\R^n)$ ($\alpha=-1$) is usually denoted $(R)_2$
and referred to as the regularity problem. See Kenig~\cite[Sec. 1.8]{Kenig}.

The Dirichlet problem with data in $L_2(\R^n)$ ($\alpha=+1$) is usually denoted $(D)_2$.
In this case our Definition~\ref{defn:wp} differs from the standard one in that we require the 
square function estimate
$$
\iint_{\R^{1+n}_+} |\nabla u|^2 tdtdx\lesssim \int_{\R^n}|u|^2 dx
$$
rather than the non-tangential maximal estimate which usually defines $(D)_2$. See Kenig~\cite[Sec. 1.8]{Kenig}.
It should be clear from Section~\ref{sec:ops} why we prefer to define the Dirichlet problem 
through the square function estimate here.

Note that by maximal function estimates proved in \cite[Thm. 2.4]{AA1}, $A\in WP(Dir, L_2)$ 
implies that $(D)_2$ holds, modulo one subtle point.
There are coefficients 
$A\in WP(Dir, L_2)$ where even for good Dirichlet data $\phi\in L_2(\R^n)\cap \dot H^{1/2}(\R^n)$, the solutions $u^0\in L_2(\R^{1+n}_+,t)$ and $u^{1/2}\in L_2(\R^{1+n}_+)$ are distinct.
See \cite[Sec. 5]{AAM} and \cite{AxNon}.
In this case, there will exist some Sobolev space $0<\sigma <1/2$ where $A\notin WP(Dir, \dot H^\sigma)$.
Here $u^{1/2}$ is seen to be the solution obtained from Lax--Milgram's theorem, which is the one for which non-tangential maximal estimates are required in the problem $(D)_2$.
However, the solution which is estimated in \cite[Thm. 2.4]{AA1} is $u^0$.

There are coefficients $A$ for which these endpoint 
boundary value problems are not well posed.
For positive results, it is known that $WP(Neu, L_2)$, $WP(Dir, \dot H^1)$ and $WP(Dir, L_2)$, as well as the less well known 
fourth end point boundary value problem $WP(Neu, \dot H^{-1})$, contain all $t$-independent 
coefficients $A(x)$ which are Hermitean, $A^*(x)= A(x)$, or of block form, $A= \begin{bmatrix} a & 0 \\ 0 & d \end{bmatrix}$, or are constant $A(x)=A_0$.
\end{ex}

There is also a duality result for these boundary value problem, which reads as follows.
\begin{prop}   \label{prop:duality}
  For the Dirichlet problem, we have 
$$
  A\in WP(Dir, \dot H^{\sigma}) \qquad\text{if and only if}\qquad A^*\in WP(Dir, \dot H^{1-\sigma}).
$$
  For the Neumann problem, we have 
$$
  A\in WP(Neu, \dot H^{-\sigma}) \qquad\text{if and only if}\qquad A^*\in WP(Neu, \dot H^{\sigma-1}).
$$
\end{prop}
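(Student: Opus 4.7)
The plan is to derive the duality from a Green identity pairing between boundary data of $A$-solutions and $A^*$-solutions. For $u$ solving the $A$-equation and $v$ solving the $A^*$-equation in $\R^{1+n}_+$ with the regularity and decay of Theorem~\ref{thm:main}, integration by parts in $\R^{1+n}_+$ gives
$$
\int_{\R^n} \pd_{\nu_A} u\cdot \bar v|_{\R^n}\,dx = \int_{\R^n} u|_{\R^n}\cdot \overline{\pd_{\nu_{A^*}} v}\,dx.
$$
Writing $f=\nabla_A u|_{\R^n}\in E_A^+\dot H^{-\sigma}(\R^n;\C^{(1+n)m})$ and $g=\nabla_{A^*} v|_{\R^n}\in E_{A^*}^+\dot H^{-(1-\sigma)}(\R^n;\C^{(1+n)m})$, and recovering $u_0=u|_{\R^n}\in \dot H^{1-\sigma}$, $v_0=v|_{\R^n}\in \dot H^\sigma$ as primitives modulo constants of $f_\ta, g_\ta$, this identity is the vanishing of the skew pairing $[f,g]:=\langle f_\no, v_0\rangle-\langle u_0, g_\no\rangle$ on pairs of upper half-space Cauchy data.

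I would then package this in Cauchy data spaces $\mathcal{C}_A^\pm\subset \dot H^{-\sigma}\oplus \dot H^{1-\sigma}$, consisting of all pairs $(f_\no, u_0)$ coming from upper and lower half-space $A$-solutions, and analogously $\mathcal{C}_{A^*}^\pm\subset \dot H^{\sigma-1}\oplus \dot H^\sigma$ for $A^*$, all coupled by $[\cdot,\cdot]$, which is perfect on the product of the two ambient spaces. Theorem~\ref{thm:main} applied in both half-spaces, together with Proposition~\ref{prop:sob} to identify the trace space, yields the Hardy-type direct sum decompositions $\mathcal{C}_A^+\oplus \mathcal{C}_A^- = \dot H^{-\sigma}\oplus \dot H^{1-\sigma}$ and $\mathcal{C}_{A^*}^+\oplus \mathcal{C}_{A^*}^- = \dot H^{\sigma-1}\oplus \dot H^\sigma$. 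In this language, $A\in WP(Neu,\dot H^{-\sigma})$ is precisely that the first-coordinate projection $\mathcal{C}_A^+\to \dot H^{-\sigma}$ is an isomorphism, and $A\in WP(Dir,\dot H^{1-\sigma})$ that the second-coordinate projection is; the analogues hold for $A^*$.

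The crucial step is upgrading the one-sided annihilation $\mathcal{C}_{A^*}^+ \subseteq (\mathcal{C}_A^+)^\circ$ (from the Green identity above) to the full symplectic duality $\mathcal{C}_{A^*}^+ = (\mathcal{C}_A^+)^\circ$. Pick $\psi\in (\mathcal{C}_A^+)^\circ$ and decompose $\psi = \psi_+ + \psi_-$ along the Hardy splitting for $A^*$. Then $\psi_+ \in \mathcal{C}_{A^*}^+\subseteq (\mathcal{C}_A^+)^\circ$ already, so $\psi_- \in (\mathcal{C}_A^+)^\circ$; and the same Green identity, applied now to solutions in the lower half-space, gives $\mathcal{C}_{A^*}^- \subseteq (\mathcal{C}_A^-)^\circ$, so $\psi_- \in (\mathcal{C}_A^-)^\circ$ as well. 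Hence $\psi_-$ annihilates $\mathcal{C}_A^+ + \mathcal{C}_A^- = \dot H^{-\sigma}\oplus \dot H^{1-\sigma}$ under the non-degenerate pairing $[\cdot,\cdot]$, forcing $\psi_- = 0$ and $\psi\in \mathcal{C}_{A^*}^+$.

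The proposition follows from a graph-adjoint computation. If $A\in WP(Dir,\dot H^{1-\sigma})$, then $\mathcal{C}_A^+=\{(\Lambda_A y, y):y\in \dot H^{1-\sigma}\}$ for some bounded $\Lambda_A: \dot H^{1-\sigma}\to \dot H^{-\sigma}$, and the symplectic complement computes to $\mathcal{C}_{A^*}^+ = \{(\Lambda_A^* y', y'):y'\in \dot H^\sigma\}$, the graph of the adjoint $\Lambda_A^*:\dot H^\sigma\to \dot H^{\sigma-1}$. Projection onto the second coordinate is then an isomorphism, so $A^*\in WP(Dir,\dot H^\sigma)$; relabelling $\sigma\leftrightarrow 1-\sigma$ gives the stated form. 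The reverse implication is symmetric, and the Neumann equivalence follows identically with first-coordinate projections in place of second-coordinate ones. The main obstacle will be justifying the Green identity at the trace regularity provided by Theorem~\ref{thm:main}, particularly in the endpoint cases $\alpha=\pm 1$ where the boundary limits exist only in the square-Dini sense \eqref{eq:dinitraces}; this will require a cutoff-and-limit argument exploiting interior regularity and the decay at $t=\infty$ to pass to the limit in the boundary integrals.
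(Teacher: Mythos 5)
Your scheme---Green's formula pairing of the two sets of Cauchy data, Hardy-type splittings of the two trace spaces, upgrading the one-sided annihilation $\mathcal{C}_{A^*}^+\subseteq(\mathcal{C}_A^+)^{\circ}$ to equality, and then the graph/adjoint computation for the Dirichlet-to-Neumann (resp.\ Neumann-to-Dirichlet) map---is essentially the argument the paper itself invokes: the paper gives no proof but refers to \cite{AR2}, Sec.~17.2, where exactly this duality of Hardy/Cauchy data spaces under the Green pairing is used. So the route is the intended one, and the final reduction to ``the annihilator of a graph is the graph of the adjoint'' is correct.

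Two points need repair before the sketch is a proof. First, for $t$-dependent $A$ there are no ``lower half-space $A$-solutions'': $A$ is only defined on $\R^{1+n}_+$, so Theorem~\ref{thm:main} cannot be ``applied in both half-spaces'' as you state. The complementary spaces $\mathcal{C}_A^-$, $\mathcal{C}_{A^*}^-$ must be taken to be the $t$-independent Hardy spaces $E_0^-\dot H^{-\sigma}$ and the corresponding space for $A_0^*$ (equivalently, Cauchy data of $A_0$- and $A_0^*$-solutions in the lower half-space); this is exactly the splitting $E_A^+\dot H^{-\sigma}\oplus E_0^-\dot H^{-\sigma}$ that Section~\ref{sec:bvp} provides under the smallness hypotheses, and with this choice your lower half-space Green identity and the upgrading argument go through verbatim. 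Second, you use the Hardy splitting for $A^*$ at the dual exponent without checking that the standing hypotheses transfer to $A^*$: one needs the algebraic identity $\widehat{A^*}=\mathcal{N}\,(\hat A)^*\,\mathcal{N}$, where $\hat A$ denotes the transform \eqref{eq:defnB} and $\mathcal{N}$ reverses the sign of the normal component, which shows that the perturbation $\E'$ attached to $A^*$ is similar to $\mathcal{N}\E^*\mathcal{N}$ and hence has $L_\infty$ and Carleson--Dahlberg norms comparable to those of $\E$; only then are both splittings, for $A$ in $\dot H^{-\sigma}\oplus\dot H^{1-\sigma}$ and for $A^*$ in $\dot H^{\sigma-1}\oplus\dot H^{\sigma}$, simultaneously available. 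Finally, as you note, the justification of the Green identity at the available trace regularity is the main technical work, and note that the two solutions always sit at opposite weights ($\alpha$ and $-\alpha$), so in the endpoint situation one pairs square-Dini traces ($\alpha=-1$) against $L_2$ traces ($\alpha=+1$); truncating to slabs $\epsilon<t<R$ and averaging in $\epsilon$ and $R$, as in the paper's $\eta_\epsilon$-argument in the example following Definition~\ref{defn:wp}, is the standard way to pass to the limit there.
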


This can be proved as in \cite[Sec. 17.2]{AR2}.
We remark that it is well known that the Dirichlet problem $(D)_2$ holds whenever the regularity problem
$(R)_2$ holds, whereas the reverse implication is not true in general.
The reason that this reverse implication holds in Proposition~\ref{prop:duality}, is that we require 
a stronger square function estimate rather than a non-tangential maximal estimate for the Dirichlet problem
with data in $L_2(\R^n)$.

\bibliographystyle{acm}

\end{document}